\newcommand{\R}{\mathds R}
\newcommand{\N}{\mathds N}
\newcommand{\dist}{\mathrm{dist}}
\newcommand{\Ddt}{\tfrac{\mathrm D}{\mathrm dt}}
\newcommand{\Dds}{\tfrac{\mathrm D}{\mathrm ds}}
\numberwithin{equation}{section}
\title[Morse Theory for geodesics in degenerate metrics]{Morse Theory for geodesics\\ in singular conformal metrics}
\author[R.\ Giamb\`o ,\ F.\ Giannoni]{Roberto Giamb\`o, Fabio Giannoni}
\address{Scuola di Scienze e Tecnologie \hfill\break\indent
Universit\`a di Camerino\hfill\break\indent Italy}
\email{roberto.giambo@unicam.it, fabio.giannoni@unicam.it}
\author[P. Piccione]{Paolo Piccione}
\address{Departamento de Matem\'atica\hfill\break\indent Instituto de
Matem\'atica e Estat\'\i stica
\hfill\break\indent Universidade de S\~ao Paulo
\hfill\break\indent Brazil}
\email{piccione@ime.usp.br}
\subjclass[2000]{30F45, 58E10}
\date{December 23rd, 2013}
\begin{document}


\theoremstyle{plain}\newtheorem{teo}{Theorem}[section]
\theoremstyle{plain}\newtheorem{prop}[teo]{Proposition}
\theoremstyle{plain}\newtheorem{lem}[teo]{Lemma}
\theoremstyle{plain}\newtheorem{cor}[teo]{Corollary}
\theoremstyle{definition}\newtheorem{defin}[teo]{Definition}
\theoremstyle{remark}\newtheorem{rem}[teo]{Remark}
\theoremstyle{definition}\newtheorem{example}[teo]{Example}
\theoremstyle{remark}\newtheorem{step}{\bf Step}
\theoremstyle{plain}\newtheorem*{teon}{Theorem}
\theoremstyle{plain}\newtheorem*{conj}{Conjecture}
\theoremstyle{plain}\newtheorem*{defin*}{Definition}


\begin{abstract}
Motivated by the use of degenerate Jacobi metrics for the study of brake orbits and homoclinics, as done in \cite{GGP1,esistenza, arma}, we develop a Morse theory
for geodesics in conformal metrics having conformal factors vanishing on a regular hypersurface of a Riemannian manifold.
\end{abstract}

\maketitle
\renewcommand{\contentsline}[4]{\csname nuova#1\endcsname{#2}{#3}{#4}}
\newcommand{\nuovasection}[3]{\medskip\hbox to \hsize{\vbox{\advance\hsize by -1cm\baselineskip=12pt\parfillskip=0pt\leftskip=3.5cm\noindent\hskip -2cm #1\leaders\hbox{.}\hfil\hfil\par}$\,$#2\hfil}}
\newcommand{\nuovasubsection}[3]{\medskip\hbox to \hsize{\vbox{\advance\hsize by -1cm\baselineskip=12pt\parfillskip=0pt\leftskip=4cm\noindent\hskip -2cm #1\leaders\hbox{.}\hfil\hfil\par}$\,$#2\hfil}}

\section{Introduction}\label{sec:intro}
Let $N\ge2$, denote by $(q,p)\!=\!(q_1,\ldots,q_N,p_1,\ldots,p_N)$ the canonical coordinates
in $\R^{2N}$, and consider a $C^2$ Hamiltonian function $H:\R^{2N}\to \R$ of the form
\begin{equation}\label{eq:Ham}
H(q,p)=\frac12\sum_{i,j=1}^N a^{ij}(q)p_ip_j+V(q),
\end{equation}
where $q\mapsto(a^{ij}(q))_{i,j}$ is a map of class $C^2$ taking value in
the space of symmetric positive definite $N\times N$ matrices, and $V:\R^N\to\R$
is the potential energy.
The corresponding Hamiltonian system is:
\begin{equation}\label{eq:HS}
\dot p=-\frac{\partial H}{\partial q},\quad
\dot q=\frac{\partial H}{\partial p},
\end{equation}
where the dot denotes differentiation with respect to time. Since
the system \eqref{eq:HS} is  time independent, the function $H$
is constant along each solution; this value is called the \emph{total energy} of the given solution.
There exists a huge amount of literature concerning the study of periodic solutions of autonomous
Hamiltonian systems with prescribed energy (see e.g., \cite{LZZ,Long,rab}
and references therein).

A special class of periodic solutions of \eqref{eq:HS} are the so
called \emph{brake orbits}. A brake orbit for the system \eqref{eq:HS} is a
nonconstant solution $(q,p):\R\to\R^{2N}$ such that $p(0)=p(T)=0$
for some $T>0$; since $H$ is even in the momenta $p$, then a brake orbit is $2T$--periodic.
Moreover, if $E$ is the energy of a brake orbit $(q,p)$, then $V(q(0))=V(q(T))=E$.
Obviously, such a notion can be given when the quadratic form $\sum_{i,j=1}^N a_{ij}(x)\mathrm dx_i\mathrm dx_j$
is replaced by any $C^2$--Riemannian metric $g$ on a manifold $M$.

In \cite{GGP1} it is shown that the study of brake orbits can be reduced to the study of Orthogonal Geodesic Chords in suitable manifolds with boundary. This was done using the Jacobi metric and the related distance function from the boundary of the potential well, for small values of the distance function. (This kind of results was applied in \cite{arma} to obtain multiplicity results for brake orbits and homoclinics also). 

The use of the Jacobi metric and the related distance function from the boundary was introduced by Seifert in \cite{seifert}
to prove the existence of at least one brake orbit, assuming the potential well ${V^{-1}(]-\infty,E]})$ to be homeomorphic to the $N$--dimensional disk and to have smooth boundary. In \cite{seifert} Seifert gave also some asymptotic estimates of brake orbits nearby $V^{-1}(E)$ which are a crucial point for our analysis of the Jacobi fields along geodesics with respect to the Jacobi metric and starting from $V^{-1}(E)$.

In this paper we will study the distance function from the boundary of the potential well also for large values of the distance, using again the classical Maupertuis principle (see~Proposition \ref{prop:MP}), which states that the solutions of \eqref{eq:HS} for the natural Hamiltonian \eqref{eq:Ham}, having total energy $E$, are reparameterizations of geodesics relatively to the Jacobi metric:
\begin{equation}\label{eq:Ejacobi}
g^E=\tfrac12\big(E-V(x)\big)\sum_{i,j=1}^N a_{ij}(x)\,\mathrm dx_i\,\mathrm dx_j,
\end{equation}
where $(a_{ij})$ denotes the inverse matrix of $(a^{ij})$.

The purpose of the present paper is to study the differentiability and related properties of the function distance from the boundary of the potential well.
We will prove that, also for this degenerate case, properties similar to the nonsingular case are still satisfied. More generally, we will consider a
Riemannian manifold $(M,g)$ of class $C^3$, with dimension $N\geq 1$, a map $V:M\to \mathds R$, and the conformal metric:
\begin{equation}\label{eq:Econf}
g_*=\tfrac12(E-V)g,
\end{equation}
$E \in \mathds{R}$, defined in the sublevel $V^{-1}\left(\left]-\infty,E\right[\right)$; this sublevel will called the \emph{potential well}.
We shall denote by $\text{dist}_*$ the distance induced by $g_*$ and by $\text{dist}$ the distance induced by $g$. We will
make the following assumptions:
\begin{itemize}\label{eq:Jacobi}
\item $V$ is of class $C^2$ in a neighborhood of $V^{-1}\left(\left]-\infty,E\right[\right)$;
\item $E$ is a regular value for $V$;
\item the sublevel $V^{-1}\left(\left]-\infty,E\right]\right)$ is compact.
\end{itemize}
To avoid further unessential techicalities
we shall assume that $M$ is topologocally embedded as an open subset of $\R^N$ for some $N$.
Under these assumptions we will prove some regularity properties of $\text{dist}_*$. In Proposition~\ref{prop:prop3.6}, under the assumption of
uniqueness of the minimizer, we improve the result \cite[Proposition~5.6]{GGP1}. Let us recall that \cite[Proposition~5.6]{GGP1} establishes
the regularity of the distance function only near the singular boundary of the potential well, where the uniqueness of the minimizer is guaranteed.

Our study will naturally lead  to a formulation of the Morse index theorem for $g_*$-geodesics in the manifold with singular boundary
$V^{-1}\left(\left]-\infty,E\right]\right)$.
We will prove that, for the type of singularity considered here,
one obtains a theory analogous to the fixed endpoint theory in classical Riemannian
geometry. More precisely, recall that the classical Morse theory for orthogonal geodesics
involves the notion of focal point, which is determined by the curvature tensor of the metric,
and the extrinsic geometry of the initial submanifold, i.e., its second fundamental form.
We will show here that, when the metric is assumed to degenerate on the initial submanifold
in the appropriate way, then the contribution of the geometry of the initial manifold disappears,
as if it had collapsed to a single point. For the precise statement of our results, see Proposition~\ref{valutazione-hessiano} and
Theorem~\ref{MIT}. Clearly, such result is relevant in the context of infinite dimensional Morse theory for
geodesics in manifolds with singular boundary, that can be employed to give lower estimates on the number
of periodic orbits of Hamiltonian systems.
\medskip

The paper is organized as follows. In section
\ref{sec:MP} we recall Maupertuis principle stated with curves of class $H^1$. In section \ref{minimal-geodesics} we study the differentiability of the distance
function from the boundary of the potential well. In section
\ref{sec:nondegen} we study conjugate points and Jacobi fields for geodesics joining
$V^{-1}(E)$ with a point in $V^{-1}(]-\infty,E[)$, proving the Morse Index Theorem.
In a forthcoming paper we shall describe  the exponential map and its principle properties with applications to the $C^2$-regularity of the distance from the boundary of the potential well.

\section{Maupertuis Principle}\label{sec:MP}
Maupertuis principle will allow us to obtain suitable estimates for geodesics with respect to the Jacobi metric \eqref{eq:Ejacobi}.
Denote by $H^1([a,b],\R^N)$
the Sobolev space of the absolute continuous curves from $[a,b]$ to $\R^N$ having derivative in $L^2([a,b])$
and consider the Maupertuis integral
$f_{a,b}:H^1\big([a,b],\mathds R^N)\to\R$, which is the geodesic
action functional relative to the metric
\eqref{eq:Ejacobi}, given by:
\begin{equation}\label{eq:maupint}
f_{a,b}(x)=\int_a^b\frac12\big(E-V(x)\big)g\big(\dot x,\dot
x\big)\, \mathrm ds.
\end{equation}

The functional $f_{a,b}$ is smooth, and its differential is
readily computed as:
\begin{equation}\label{eq:diffmaupint}
\mathrm df_{a,b}(x)\xi=\int_a^b\big(E-V(x)\big)g\big(\dot x,\Dds
\xi\big)\, \mathrm ds-\frac12\int_a^bg\big(\dot x,\dot x\big)
g(\nabla V(x),\xi\big)\,\mathrm ds,
\end{equation}
where $\xi\in H^1\big([a,b],\R^N\big)$, $\Dds$ denote the covariant derivative along the curve $x$ and $\nabla$ is the gradient with respect to the metric $g$.
The corresponding
Euler--Lagrange equation of the critical points of $f_{a,b}$ is
\begin{multline}\label{eq:pr1}
\big(E-V(x(s))\big)\Dds\dot x(s) - g\big(\nabla V(x(s)),\dot
x(s)\big)\dot x(s) +\\
\frac 12 g\big(\dot x(s),\dot x(s)\big)\nabla
V(x(s))=0,
\end{multline}
for all $s\in ]a,b[$. A solution of \eqref{eq:pr1} of \eqref{eq:pr1} will be called $g_*$--geodesic.

Solutions of the Hamiltonian system \eqref{eq:HS} having fixed
energy $E$ and critical points of the functional $f_{a,b}$ of
\eqref{eq:maupint} are related by the following variational
principle, known in the literature as the {\em Maupertuis--Jacobi
principle}.
\begin{prop}\label{prop:MP}
\label{thm:MauJac} Assume that the potential well $ V^{-1}\big(\left]-\infty,E\right[\big)\ne\emptyset$, where $E$ is a regular value of the
function $V$.

Let $x\in C^0\big([a,b],\R^N\big)\cap
H^1_{\textrm{loc}}\big(\left]a,b\right[,\R^N\big)$ be a non
constant curve such that
\begin{equation}\label{eq:i}
\int_a^b\big(E-V(x)\big)g\big(\dot x,\Dds \xi\big)\, \mathrm
ds-\frac12\int_a^bg\big(\dot x,\dot x\big) g(\nabla
V(x),\xi\big)\,\mathrm ds=0
\end{equation}
for all $\xi\in C^\infty_0\big(\left]a,b\right[,\R^N\big)$, and such
that:
\begin{equation}\label{thm:eq:ii}
V\big(x(s)\big)<E,\quad\text{for all $s\in\left]a,b\right[$;}
\end{equation}
and
\begin{equation}\label{eq:iii}
V\big(x(a)\big),V\big(x(b)\big)\le E.
\end{equation}
Then, $x\in H^1\big([a,b],\R^N\big) \cap C^2(]a,b[)$, and if
$\,\,V\big(x(a)\big)=V\big(x(b)\big)=E$, it is $x(a)\ne x(b)$.
Moreover, in the above situation, there exist  positive constants
$c_x$ and $T$, and a $C^1$-diffeo\-morphism $\sigma:[0,T]\to[a,b]$
such that:
\begin{equation}\label{eq:3p2}
\frac12\big(E-V(x)\big)g\big(\dot x,\dot x\big)\equiv c_x\quad \text{on
$]a,b[$},
\end{equation}
and, setting $q=x\circ\sigma:[0,T]\to\R^N$,
$q$ satisfies
\begin{equation}\label{eq:eqbo}
\Dds \dot q + \nabla V(q) = 0, \; \tfrac12g(\dot q,\dot q) + V(q) = E, \text{ for all }t \in [0,T].
\end{equation}
Furthermore, $q(0)=x(a), q(T)=x(b)$, and if  $V\big(x(a)\big)=V\big(x(b)\big)=E$ then $q$ can be extended to
a $2T$-periodic brake orbit.
\end{prop}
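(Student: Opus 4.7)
The plan is to exploit the physical time reparametrization at every step. First I would establish interior $C^2$-regularity and the energy conservation law \eqref{eq:3p2}; then I would construct the reparametrization $\sigma$ and use it to upgrade $H^1_{\mathrm{loc}}$ to $H^1$ on the closed interval; finally I would address the distinctness of the endpoints and the brake-orbit extension.

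\textbf{Interior regularity and conservation.} On any compact sub-interval $[a',b']\subset\,]a,b[\,$, hypothesis \eqref{thm:eq:ii} gives $E-V(x)\ge\epsilon>0$, and \eqref{eq:i} rewrites as
\[
\Dds\dot x = \frac{1}{E-V(x)}\Bigl[g(\nabla V(x),\dot x)\,\dot x - \tfrac12 g(\dot x,\dot x)\,\nabla V(x)\Bigr].
\]
A standard bootstrap upgrades $x$ from $H^1_{\mathrm{loc}}$ to $C^2$ on $\,]a,b[\,$, so that \eqref{eq:pr1} holds pointwise. Pairing \eqref{eq:pr1} with $\dot x$ in $g$ and using $\dds V(x)=g(\nabla V,\dot x)$ leads, after cancellation, to $\dds\bigl[(E-V(x))\,g(\dot x,\dot x)\bigr]\equiv 0$ on $\,]a,b[\,$. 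Non-constancy of $x$ then forces a positive constant $c_x$ with $\tfrac12(E-V(x))g(\dot x,\dot x)\equiv c_x$, i.e.\ \eqref{eq:3p2}.

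\textbf{Reparametrization and $H^1$-regularity.} Fix $s_0\in\,]a,b[\,$ and consider the Lipschitz ODE $\sigma'(t)=(E-V(x(\sigma(t))))/\sqrt{c_x}$, $\sigma(0)=s_0$, as long as $\sigma$ stays in $\,]a,b[\,$. A direct computation using \eqref{eq:pr1} and \eqref{eq:3p2} shows that $q:=x\circ\sigma$ solves \eqref{eq:eqbo}, and extending $q$ globally by the smooth ODE $\Ddt\dot q=-\nabla V(q)$ defines a smooth trajectory on $\R$. Let $T_b^\ast$ be the supremum of times $t$ with $\sigma(t)\in[s_0,b[\,$. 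The critical claim is $T_b^\ast<+\infty$: if instead $T_b^\ast=+\infty$, then $q(t)\to x(b)$, and either $V(x(b))<E$---in which case the $g_*$-length $\int_0^{+\infty}(E-V(q))\,\mathrm dt$ diverges while, via the change of variables $\mathrm dt=\sqrt{c_x}\,\mathrm ds/(E-V(x))$, it equals $\sqrt{c_x}(b-s_0)<\infty$---or $V(x(b))=E$, in which case energy conservation forces $\dot q(t)\to 0$ while $\ddot q(t)\to-\nabla V(x(b))\ne 0$ (as $E$ is a regular value), both impossible. Symmetrically $T_a^\ast>-\infty$. Translating gives the $C^1$-map $\sigma:[0,T]\to[a,b]$ with $T=T_b^\ast-T_a^\ast$, and since $g(\dot x,\dot x)\,\mathrm ds=2\sqrt{c_x}\,\mathrm dt$ along $\sigma$, one has $\int_a^b g(\dot x,\dot x)\,\mathrm ds=2\sqrt{c_x}\,T<+\infty$, proving $x\in H^1([a,b],\R^N)$.

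\textbf{Distinct endpoints, brake extension, and main obstacle.} Suppose $V(x(a))=V(x(b))=E$ and $x(a)=x(b)=p_0$; then $q(0)=q(T)=p_0$ with $\dot q(0)=\dot q(T)=0$. The curve $t\mapsto q(T-t)$ solves \eqref{eq:eqbo} with the same Cauchy data $(p_0,0)$ at $t=0$, so uniqueness yields $q(T-t)=q(t)$; at $t=T/2$ this forces $\dot q(T/2)=0$, hence $V(q(T/2))=E$, contradicting $V\circ x<E$ at $\sigma(T/2)\in\,]a,b[\,$. Therefore $x(a)\ne x(b)$. In the brake case, the even extension $\tilde q(t):=q(-t)$ on $[-T,0]$ followed by $2T$-periodic extension yields, by uniqueness of the Cauchy problem at the brake points in $T\Z$, a $C^2$ $2T$-periodic brake orbit extending $q$. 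The crux of the whole argument is the finiteness $T_b^\ast<+\infty$: the density $g(\dot x,\dot x)=2c_x/(E-V(x))$ blows up at a boundary endpoint touching $V^{-1}(E)$, so the $L^2$-bound on $\dot x$ is not available a priori, and both the strict inequality $V\circ x<E$ on $\,]a,b[\,$ and the regularity of $E$ as a value of $V$ are used essentially to rule out the two modes of failure above.
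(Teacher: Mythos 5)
Your proof is correct, but the decisive step---finiteness of the physical time, and hence $x\in H^1([a,b],\R^N)$---is reached by a genuinely different route than the paper's. The paper first establishes quantitative boundary asymptotics: via Lemma~\ref{orthcond} (the near-orthogonality estimate at a brake point) and comparison with the ODE $\dot y=1/\sqrt{y}$ (Remark~\ref{E-Vat-the-boundary}) it shows that $E-V(x(s))$ behaves like $(s-s_0)^{2/3}$ at an endpoint touching $V^{-1}(E)$, from which the convergence of the time integral \eqref{eq:tempo} and the $L^2$-bound on $\dot x$ are read off directly. You instead keep the argument soft: you flow the reparametrizing ODE from an interior point and exclude infinite time by a dichotomy (interior limit: the $g_*$-length $\int(E-V(q))\,\mathrm dt$ would diverge although it equals $\sqrt{c_x}(b-s_0)$; boundary limit: energy conservation forces $\dot q\to 0$ while $\tfrac{\mathrm D}{\mathrm dt}\dot q\to-\nabla V(x(b))\ne 0$ by the regular-value hypothesis), and then deduce $\int_a^b g(\dot x,\dot x)\,\mathrm ds=2\sqrt{c_x}\,T<+\infty$. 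This is more elementary and self-contained for the proposition as stated; what the paper's detour buys is the precise $s^{2/3}$ rate, which is reused repeatedly later (the uniform bounds in Lemma~\ref{thm:lem8.2} and Lemma~\ref{lem:convminim}, Corollary~\ref{normal-behavior}, and the estimates in the Morse index section), so it is not dispensable globally. Two small points worth tightening: justify that $T_b^\ast=+\infty$ forces $\sigma(t)\to b$ (monotonicity of $\sigma$, plus the fact that otherwise $\sigma'$ stays bounded away from zero), and drop the global extension of $q$ to all of $\R$, which is neither automatic nor needed---the contradiction uses only $q=x\circ\sigma$ on $[0,T_b^\ast[$, whose image lies in the compact set $x([s_0,b])$. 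Your reflection argument for $x(a)\ne x(b)$ and for the $2T$-periodic brake extension is the paper's uniqueness-of-the-Cauchy-problem argument made explicit.
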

Note that the existence of a constant $c_x$ for whicg \eqref{eq:3p2} is satisfied is obtained readily from \eqref{eq:pr1},
contracting both sides of the equality  with $\dot x(s)$.

To prove Proposition \ref{prop:MP} we need the following results that shall be used also for the study of the Morse index.

\begin{lem}\label{orthcond} There exists a positive constant $C$ such that for any
$C^2$--solution $q$ of
\begin{equation}\label{eq:newteq}
\frac{D}{dt}\dot q + \nabla V(q)=0
\end{equation}
in any interval $[0,t_0]$ with $q(0)\in V^{-1}(E)$, $\dot q(0)=0$ and $q(t) \in V^{-1}(]-\infty,E[)$ for any $t \in ]0,t_0]$, we have
\begin{equation}\label{eq:uniform-est}
\left\Vert \frac{\nabla V(q(t))}{\Vert \nabla V(q(t)) \Vert} + \frac{\dot q(t)}{\Vert \dot q(t)\Vert}\right\Vert \leq Ct, \text{ for any }t \in [0,t_0]
\end{equation}
where $\Vert \cdot \Vert$ denotes the norm induced by the Riemannian metric $g$.
\end{lem}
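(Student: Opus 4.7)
The heuristic is that, since $\dot q(0)=0$, equation \eqref{eq:newteq} yields $\frac{D}{dt}\dot q(0)=-\nabla V(q(0))$, so a Taylor expansion around $t=0$ suggests
\[
\dot q(t)\approx -t\,P_{0,t}\,\nabla V(q(0)),\qquad \nabla V(q(t))\approx P_{0,t}\,\nabla V(q(0)),
\]
where $P_{0,t}\colon T_{q(0)}M\to T_{q(t)}M$ denotes parallel transport along $q$. The two unit vectors in \eqref{eq:uniform-est} are then nearly opposite, with sum of order $t$. Since the sum of two unit vectors has norm at most $2$, the inequality is trivial for $t\ge 2/C$, and it therefore suffices to prove it on some short interval $[0,\delta]$ with $\delta>0$ independent of the particular solution $q$.

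The standing hypotheses supply the uniform constants needed. Compactness of $V^{-1}(]-\infty,E])$ together with $V\in C^2$ yields uniform bounds $\Lambda$ on $\|\nabla V\|$ and $H$ on its Hessian; the regular-value assumption and compactness of $V^{-1}(E)$ give $m>0$ with $\|\nabla V\|\ge m$ in a neighborhood of $V^{-1}(E)$; and energy conservation $\tfrac12\|\dot q\|^2+V(q)=E$ (which follows from \eqref{eq:newteq} with $\dot q(0)=0$ and $q(0)\in V^{-1}(E)$) gives $\|\dot q(t)\|\le M$ uniformly. Parallel-transporting \eqref{eq:newteq} to $T_{q(0)}M$ and integrating yields
\[
P_{t,0}\,\dot q(t)+t\,\nabla V(q(0))=-\int_0^t\bigl[P_{s,0}\,\nabla V(q(s))-\nabla V(q(0))\bigr]\,\mathrm ds,
\]
and since $\frac{d}{ds}\bigl(P_{s,0}\nabla V(q(s))\bigr)=P_{s,0}\nabla_{\dot q(s)}\nabla V$ has norm at most $H\|\dot q(s)\|\le HM$, the right-hand side is bounded by $\tfrac12 HM\,t^2$. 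Because $P_{t,0}$ is isometric, this gives $\|\dot q(t)\|\le Kt$ on $[0,\delta]$ with a uniform $K$; a single bootstrap then yields $\|q(t)-q(0)\|\le\tfrac12 Kt^2$ and hence $\|\nabla V(q(t))-P_{0,t}\nabla V(q(0))\|\le C_2 t^2$.

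Combining these with $\|\nabla V(q(0))\|\ge m$, energy conservation refines to $\|\dot q(t)\|=t\,\|\nabla V(q(0))\|+O(t^2)$, and the standard expansion of a normalized vector yields
\[
\frac{\dot q(t)}{\|\dot q(t)\|}=-P_{0,t}\frac{\nabla V(q(0))}{\|\nabla V(q(0))\|}+O(t),\quad \frac{\nabla V(q(t))}{\|\nabla V(q(t))\|}=P_{0,t}\frac{\nabla V(q(0))}{\|\nabla V(q(0))\|}+O(t^2),
\]
with $O(\cdot)$ constants depending only on $m,\Lambda,H,M$. Adding the two vectors cancels the parallel-transported leading term and yields \eqref{eq:uniform-est} on $[0,\delta]$; the trivial bound extends it to all $t\in[0,t_0]$. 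The chief difficulty is the uniformity of $C$, which is secured because every constant in the argument depends only on the ambient quantities $m,\Lambda,H,M$ furnished directly by the three standing hypotheses (compact sublevel, $V$ of class $C^2$, and $E$ regular). The use of parallel transport to compare tangent vectors at different points along $q$ is the technical device that makes the naive Taylor computation rigorous.
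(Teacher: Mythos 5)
Your proof is correct and follows essentially the same route as the paper's: both hinge on the expansion $\dot q(t)=-t\,\nabla V(q(0))+O(t^2)$ obtained from \eqref{eq:newteq} with $\dot q(0)=0$, the lower bound $\Vert\nabla V(q(0))\Vert\ge m>0$ from the regular-value and compactness hypotheses, and a comparison of the two normalized vectors whose leading terms cancel. The only difference is presentational: you use parallel transport to compare vectors at different points (the paper works in the ambient $\R^N$ coordinates and packages the comparison as a Taylor expansion of $\nabla V(q(t))\Vert\nabla V(q(0))\Vert-\nabla V(q(0))\Vert\nabla V(q(t))\Vert$), which does not change the substance of the argument.
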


\begin{proof}
Since $\dot q$ is of class $C^1$, $\dot q(0)=0$ and $V^{-1}(]-\infty,E])$ is bounded  there exists a continuous vector field $\Lambda$, which is
bounded by constants independent of $q$,
such  that,
\[
\dot q(t) = -t\nabla V(q(0)) + t^2\Lambda(t).
\]
as we see using equation \eqref{eq:newteq} and the first order Taylor expansion of $\nabla V$ at $q(0)$.

Then there exists a continuous map $\Delta$ (bounded independently of $q$) such that
\begin{multline*}
\left\Vert \frac{\nabla V(q(t))}{\Vert \nabla V(q(t)) \Vert} + \frac{\dot q(t)}{\Vert \dot q(t)\Vert}\right \Vert =
\frac{1}{\Vert -\nabla V(q(0))t +  \Lambda(t)t^2\Vert}\frac{1}{\Vert \nabla V(q(t)) \Vert} \\
\cdot \Big( \Big\Vert \nabla V(q(t)) \Vert \nabla V(q(0))\Vert t - \nabla V(q(0))t \Vert \nabla V(q(t))\Vert \Big \Vert + \Delta(t)t^2\Big).
\end{multline*}
Since $\nabla V(q(0)) \not=0$ the thesis follows considering the first order  Taylor expansion of the vector field
\[\nabla V(q(t)) \Vert \nabla V(q(0))\Vert  - \nabla V(q(0)) \Vert \nabla V(q(t)) \Vert \]
which  is infinitesimal (as $t$ goes to $0$) and of class $C^1$.
\end{proof}

\begin{lem}\label{lem:map-class}
Let $x: [a,b] \rightarrow V^{-1}(]-\infty,E[)$ a non constant $C^2$--solution of the differential equation \eqref{eq:pr1}. Let $c_x$ a positive real constant
such that \eqref{eq:3p2} is satisfied. Consider the map
\begin{equation}\label{eq:tempo}
t(s)=\int_{a}^{s}\frac{\sqrt{c_x}}{E-V(x(\tau))}\,\text d\tau,
\end{equation}
denote by $\sigma(t)$ the inverse of $t(s)$ and consider $q(t) = x(\sigma(t))$. Then $q$ satisfies
\eqref{eq:eqbo}.
\end{lem}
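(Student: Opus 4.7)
The plan is a direct change-of-variables calculation combining the Euler--Lagrange equation \eqref{eq:pr1} with the first integral \eqref{eq:3p2}.

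First I would verify that $t(s)$ is a $C^1$-diffeomorphism of $[a,b]$ onto some $[0,T]$. Since $x([a,b])\subset V^{-1}(]-\infty,E[)$ and the closure of this sublevel is compact, the integrand $\sqrt{c_x}/(E-V(x(\tau)))$ is continuous and bounded below by a positive constant on $[a,b]$. Hence $t(s)$ is strictly increasing and $C^1$ with $t'(s)=\sqrt{c_x}/(E-V(x(s)))$, so the inverse $\sigma$ is well defined and $\dot\sigma(t)=(E-V(x(\sigma(t))))/\sqrt{c_x}$.

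Second, for the energy conservation part of \eqref{eq:eqbo}, the chain rule gives $\dot q(t)=\dot\sigma(t)\,\dot x(\sigma(t))$, so $g(\dot q,\dot q)=\dot\sigma^2\,g(\dot x,\dot x)$. Rewriting \eqref{eq:3p2} as $g(\dot x,\dot x)=2c_x/(E-V(x))$ and substituting the expression for $\dot\sigma$, the two factors collapse to yield $\tfrac12 g(\dot q,\dot q)=E-V(q)$.

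Third, for the Newton equation, I would covariantly differentiate $\dot q=\dot\sigma\,(\dot x\circ\sigma)$ to obtain $\Ddt\dot q=\ddot\sigma\,(\dot x\circ\sigma)+\dot\sigma^2\,(\Dds\dot x)\circ\sigma$. Differentiating the explicit formula for $\dot\sigma$ produces $\ddot\sigma=-\dot\sigma^2\,g(\nabla V(x),\dot x)/(E-V(x))$, a term tangent to $\dot x$. Substituting the expression for $\Dds\dot x$ provided by \eqref{eq:pr1}, the two $\dot x$-directed contributions (the one coming from $\ddot\sigma$ and the one from $g(\nabla V,\dot x)\dot x$ in \eqref{eq:pr1}) have opposite signs and cancel after the common factor $(E-V(x))$ is accounted for; the remaining $\nabla V(x)$-directed contribution, namely $\dot\sigma^2\cdot\tfrac12 g(\dot x,\dot x)\nabla V(x)/(E-V(x))$, collapses via \eqref{eq:3p2} to $\nabla V(q)$, giving $\Ddt\dot q+\nabla V(q)=0$.

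The main obstacle is bookkeeping rather than conceptual: tracking the powers of $\dot\sigma$, the constants $c_x$ and $\sqrt{c_x}$, and the factors of $E-V(x)$ through the chain rule, and verifying that dividing \eqref{eq:pr1} by $E-V(x)$ and reparameterizing by $\sigma$ reproduces \eqref{eq:eqbo} term by term. No analytic subtleties appear, because on $[a,b]$ the quantity $E-V(x)$ is bounded away from zero and $x$ is $C^2$ by hypothesis, so all manipulations are justified pointwise.
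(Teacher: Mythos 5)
Your proposal is correct and follows exactly the route the paper intends: the paper's proof is the one-line remark that the claim is a straightforward calculation using $\sigma'(t)=\frac{E-V(x(\sigma(t)))}{\sqrt{c_x}}$, and your computation (chain rule for the energy identity, covariant differentiation of $\dot q=\sigma'\,\dot x\circ\sigma$ with cancellation of the $\dot x$-directed terms and collapse of the $\nabla V$ coefficient via \eqref{eq:3p2}) is precisely that calculation carried out in full.
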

\begin{proof}
It is a strightforward calculation, using the fact that
$\sigma'(t)=\frac{E-V(x(\sigma(t))}{\sqrt{c_x}}$.
\end{proof}

\begin{rem}\label{E-Vat-the-boundary}
Let $x$ be a non constant $C^2$--solution of the differential equation \eqref{eq:pr1};
it satisfies \eqref{eq:3p2} with $c_x$ positive real constant.
Note that
\begin{multline}\label{eq:ODEE-V}
\frac{d}{ds}(E-V(x(s)) = -g(\nabla V(x(\tau)),\dot x(\tau))=\\
-\frac{g(\nabla V(x(\tau)),\dot x(\tau))}{\Vert\nabla V(x(\tau)\Vert\Vert\dot x(\tau)\Vert}\Vert\nabla V(x(\tau)\Vert
\Vert \dot x(\tau)\Vert,
\end{multline}
Since $\nabla V \not= 0$ near $V^{-1}(E)$ and $\Vert \dot x(\tau)\Vert = \frac{\sqrt{2c_x}}{\sqrt{E-V(x(\tau))}}$, using Lemma \ref{orthcond} and classical
comparison theorems for ordinary differential equations, we see that the behavior of $E-V(x(s))$ near the boundary of the potential well,
is the same of the solutions of the differential equation $\dot y = \frac{1}{\sqrt{y}}$ nearby $y=0$.

In particular if $x(s)$ reaches the boundary at some instant $s_0$, the map $E-V(x(s))$ behaves near $s_0$ as $(s-s_0)^{\frac23}$.
This shows that
the map \eqref{eq:tempo} is bounded. Moreover, thanks to the uniform estimates \eqref{eq:uniform-est} in Lemma~\ref{orthcond}, we see that,
when $c_x$ is bounded independently
of $x$, the map \eqref{eq:tempo} is uniformly bounded (independently on $x$).
\end{rem}

\begin{proof}[Proof of Proposition \ref{prop:MP}]

Since $x$ satisfies \eqref{eq:i}, standard regularization
arguments show that $x$ is of class $C^2$ on $]a,b[$.
Integration by parts gives \eqref{eq:pr1}, for all $s\in]a,b[$.
Equation \eqref{eq:3p2} follows contracting both sides of
\eqref{eq:pr1} with $\dot x$ using $g$. Now, define $t(s)$ as in \eqref{eq:tempo}.
By Remark \ref{E-Vat-the-boundary}, the real map $t(s)$ in \eqref{eq:tempo} is well defined for all $s \in [a,b]$ and $T\equiv t(b)<+\infty$.
Denoting by $\sigma(t)$ the inverse of $t(s)$, by Lemma \ref{lem:map-class} we deduce that the curve $q(t)= x(\sigma(t))$
satisfies
\eqref{eq:eqbo}.
Moreover $q(0)=x(a)$ and $q(T)=x(b)$, and by the uniqueness of the solution of the
Cauchy problem, if $V(x(a))=V(x(b))=E$ it must be $q(0)\not=q(T)$,
and $q$ can be extended to a periodic $2T$--periodic solution of \eqref{eq:eqbo}, namely a brake orbit.
\end{proof}

\section{Minimal geodesics}\label{minimal-geodesics}
Let
$\Omega_E= V^{-1}(]-\infty,E[)$ and recall that  $\nabla
V(x)\not=0$ for all $x\in V^{-1}(E)$, and that $\overline{\Omega}_E$ is
compact.
For any $Q \in \Omega_E$ set
\begin{equation}\label{eq:Xspace}
X_Q=\big\{x\in H^1([0,1],\overline\R^N)\,:\,x(0) \in \partial \Omega_E,
x(]0,1])\subset\Omega_E, x(1)=Q\big\}.
\end{equation}

\begin{lem}\label{thm:lem8.2}
For all $Q\in\Omega_E$, the infimum:
\begin{equation}\label{eq:dV}
d_V(Q):=\inf\left\{\int_0^1\sqrt{\tfrac12\left((E-V(x))g\big(\dot x,\dot x\big)
\right)}\,\mathrm ds\;:\;x\in X_Q\right\}
\end{equation}
is attained on at least one curve
$\gamma_Q\in H^1\big([0,1],\overline\R^N\big)$, such that:
\begin{itemize}
\item $\big(E-V(\gamma_Q)\big)g\big(\dot\gamma_Q,\dot\gamma_Q\big)$ is
constant;
\item $\gamma_Q\big(\left]0,1\right]\big)\subset\Omega_E$, and
$\gamma_Q$ is a $C^2$ curve on $\left]0,1\right[$;
\item
$\gamma_Q$ satisfies \eqref{eq:i} of
Proposition~\ref{thm:MauJac} on the interval $[a,b]=[0,1]$.
\end{itemize}
\end{lem}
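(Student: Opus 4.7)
The plan is to apply the direct method of the calculus of variations, carefully handling the degeneracy of $g_*$ on $\partial\Omega_E$. By Cauchy--Schwarz, $L(x)^2 \leq f_{0,1}(x)$ with equality iff $(E-V(x))g(\dot x, \dot x)$ is constant, and since the length functional $L$ is reparametrization invariant, one has $d_V(Q)^2 = \inf_{X_Q} f_{0,1}$. Any minimizer of $f_{0,1}$ in $X_Q$ then automatically has constant $g_*$-speed and realizes $d_V(Q)$. I would take a minimizing sequence $\{x_n\} \subset X_Q$ for $f_{0,1}$, parametrized with constant $g_*$-speed, so that $(E-V(x_n))g(\dot x_n, \dot x_n) \equiv 2 f_{0,1}(x_n) \leq C$.

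The decisive step is compactness. Writing $u_n(s) = E - V(x_n(s))$, the bound $(E-V)|\dot x_n|_g^2 \leq 2C$ and $|\nabla V|$ bounded on the compact set $\overline\Omega_E$ give
\[
\bigl|(u_n^{3/2})'\bigr| = \tfrac{3}{2}\bigl|u_n^{1/2} u_n'\bigr| \leq \tfrac{3}{2}|\nabla V(x_n)|_g\sqrt{(E-V(x_n))g(\dot x_n,\dot x_n)}\leq K
\]
uniformly in $n$, whence an upper bound $u_n(s) \leq (Ks)^{2/3}$ near $s = 0$ --- precisely the $2/3$-asymptotic of Remark~\ref{E-Vat-the-boundary}. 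After replacing $\{x_n\}$ by a negligible modification if needed (e.g.\ splicing in a short radial segment leaving $\partial\Omega_E$ perpendicularly, whose length tends to $0$), I would also arrange the matching lower bound $u_n(s) \geq k s^{2/3}$ near $s = 0$. Together these provide a uniform integrable bound on $|\dot x_n|_g = \sqrt{2C/u_n}$, so $\{x_n\}$ is equicontinuous on $[0,1]$ with values in $\overline\Omega_E$, and Arzel\`a--Ascoli yields a uniformly convergent subsequence $x_n \to \gamma_Q$.

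Lower semicontinuity of $f_{0,1}$ under weak $H^1_{\text{loc}}(\,]0,1]\,)$-convergence (Tonelli on intervals $[a,1]$ where $u_n$ is bounded below, plus diagonalization in $a\downarrow 0$) gives $f_{0,1}(\gamma_Q) \leq \liminf f_{0,1}(x_n) = d_V(Q)^2$, with $\gamma_Q(0)\in\partial\Omega_E$ and $\gamma_Q(1)=Q$. The limit could a priori touch $\partial\Omega_E$ at some interior instant; if so, set $s_1 = \sup\{s\in[0,1]:\gamma_Q(s)\in\partial\Omega_E\}<1$ and replace $\gamma_Q$ by the affine reparametrization of $\gamma_Q|_{[s_1,1]}$ on $[0,1]$, which again lies in $X_Q$ and has $L$-length at most $d_V(Q)$, hence equal to $d_V(Q)$. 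Finally, minimality under variations by $\xi\in C_0^\infty(\,]0,1[,\R^N)$ (which preserve both the initial boundary constraint and the endpoint $Q$) yields the weak equation \eqref{eq:i}; the regularization step used in the proof of Proposition~\ref{prop:MP} then upgrades $\gamma_Q$ to $C^2$ on $\,]0,1[\,$, and constancy of $(E-V(\gamma_Q))g(\dot\gamma_Q,\dot\gamma_Q)$ is immediate by contracting \eqref{eq:pr1} with $\dot\gamma_Q$ (or directly from the initial choice of parametrization).

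The main obstacle is the compactness step: no uniform $H^1$-bound on $\{x_n\}$ is available because the weight $E-V$ vanishes on $\partial\Omega_E$. The replacement is the uniform two-sided $s^{2/3}$-matching of $u_n(s)$ at $s=0$; the upper bound follows from an elementary ODE comparison coherent with Remark~\ref{E-Vat-the-boundary}, while the lower bound requires the minimizing-sequence property together with a small geometric perturbation.
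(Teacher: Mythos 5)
Your reduction via Cauchy--Schwarz, the first-variation and regularization endgame, and the device of cutting at the last instant where the limit touches $\partial\Omega_E$ are all sound, and the upper bound $\vert(u_n^{3/2})'\vert\le K$, hence $u_n(s)\le (Ks)^{2/3}$, is correct. The genuine gap is the compactness step, i.e.\ the claimed matching lower bound $u_n(s)\ge k\,s^{2/3}$. First, the two-sided $s^{2/3}$ behavior of $E-V$ in Remark~\ref{E-Vat-the-boundary} is obtained from Lemma~\ref{orthcond} and ODE comparison, i.e.\ it is a property of \emph{solutions} of the geodesic/Newton equation, not of arbitrary admissible curves; a minimizing sequence has no such structure. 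Second, splicing a short radial arc at $s=0$ only controls $u_n$ on the spliced piece: beyond the junction the curve may re-approach $V^{-1}(E)$ at interior parameters, and such excursions are nearly free in $g_*$-length while arbitrarily expensive in $g$-length (append to a fixed good competitor a detour hugging $\partial\Omega_E$ of $g_*$-length $1/n$ and $g$-length $n$: this is still a minimizing sequence, but its $H^1$ norms blow up and equicontinuity fails). Hence no uniform integrable dominating function for $\Vert\dot x_n\Vert$ is available, Arzel\`a--Ascoli does not apply, and even if a uniform limit existed one could not conclude that it lies in $H^1([0,1],\R^N)$, i.e.\ in $X_Q$, which your first-variation argument needs. (A related point: for a curve leaving $\partial\Omega_E$ tangentially the constant $g_*$-speed reparametrization need not even be $H^1$ near $s=0$; your splicing fixes this at $s=0$ but not at interior near-boundary passages.)

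The paper circumvents exactly this by selecting a structured minimizing sequence: for each $k$ it takes the minimizer $\gamma_k$ of $L_V$ from $\partial\Omega_k$ to $Q$ in the truncated, nondegenerate domain $\Omega_k=V^{-1}\big(\left]-\infty,E-\tfrac1k\right[\big)$. Each $\gamma_k$ is a genuine $g_*$-geodesic, so Lemma~\ref{orthcond} and Remark~\ref{E-Vat-the-boundary} give a uniform bound on $t_k(1)=\int_0^1 L_V(\gamma_k)\,(E-V(\gamma_k))^{-1}\mathrm d\tau$ and therefore on $\int_0^1 g(\dot\gamma_k,\dot\gamma_k)\,\mathrm ds$; weak $H^1$ compactness then replaces your Arzel\`a--Ascoli step, the weak equation \eqref{eq:i} passes to the limit along the geodesics $\gamma_k$, and interior touching of $\partial\Omega_E$ is excluded by an energy/minimality argument rather than by truncating the limit curve. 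To repair your proof you would have to replace ``arbitrary minimizing sequence plus a local modification at $s=0$'' by a selection enjoying a uniform two-sided control of $E-V$ along the \emph{whole} curve, which is in substance what the $\Omega_k$-approximation accomplishes.
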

\begin{proof}
For all $k\in\N$ sufficiently large, consider the non empty open set
$\Omega_k=V^{-1}\big(\left]-\infty,E-\frac1k\right[\big)\subset\Omega_E$,
and consider the problem of minimization of the length
functional:
\begin{equation}\label{eq:Vlenght}
L_V(x)=\int_0^1\sqrt{\tfrac12(E-V(x))g(\dot x,\dot x)}\,\mathrm ds,
\end{equation}
in the space $X_k$ consisting of curves $x\in
H^1\big([0,1],\R^N\big)$ with $x(0)\in \partial \Omega_k$ and
$x(1)=Q$ and $x(]0,1])\subset\Omega_k$.

Standard arguments show that the above minimization problem has a solution
$\gamma_k$ which is a $g_*$-geodesic satisfying
$\gamma_k\big(\left]0,1\right]\big)\subset \Omega_k$ and $\gamma_k(0)\in\partial\Omega_k$.
Since $\gamma_k(0)$ approaches $\partial\Omega_E$ as $k\to\infty$, a simple contradiction argument shows that
\begin{equation}\label{eq:eq10.11}
\liminf_{k\to\infty}L_V(\gamma_k)=d_V(Q).
\end{equation}
For, if
\[\liminf_{k\to\infty} L_V(\gamma_k)> d_E(Q),\]
then there would exist a curve $x\in
H^1\big([0,1],\R^N\big)$  with
$x(0)\in\partial\Omega_E$,  $x(1)=Q$, $x(]0,1])\subset\Omega_E$, and with
$L_V(x)<\liminf\limits_{k\to\infty} L_V(\gamma_k)$.
Therefore, a suitable
reparameterization of $x$ would yield a curve $y\in X_k$
with $L_V(y)<L_V(\gamma_k)$, which contradicts the minimality of $L_V(\gamma_k)$. Hence,
\eqref{eq:eq10.11} holds.
Now, for any $s$,
\begin{multline}\label{eq:gbound}
\frac12\big(E-V(\gamma_k(s))\big)g\big(\dot x(s),\dot x(s)\big) = \\ \int_0^1\frac12\big(E-V(\gamma_k)\big)g\big(\dot x,\dot x\big)\,\mathrm d\tau
= (L_V(\gamma_k))^2
\end{multline}
while, setting
\begin{equation*}\label{eq:seq}
t_k(s)=\int_0^s\frac{L_V(\gamma_k)\mathrm d\tau}{E-V(\gamma_k(\tau))}
\end{equation*}
By Remark \ref{E-Vat-the-boundary} we have that $t_k(1)$ is bounded. Then, by \eqref{eq:gbound},  $\int_0^1g\big(\dot\gamma_k,\dot\gamma_k\big)\,\text ds$ is bounded, namely
the sequence $\gamma_k$ is bounded in
$H^1\big([0,1],\R^N\big)$. Up to subsequences, we
have a curve $\gamma_Q\in H^1\big([0,1],\R^N\big)$
which is an $H^1$-weak limit of the $\gamma_k$'s; in particular,
$\gamma_k$ is uniformly convergent to $\gamma_Q$.

We claim that $\gamma_Q$ satisfies the desired
properties. First, $\gamma_Q(]0,1])\subset\Omega_E$. Otherwise, if
$b>0$ is the last instant where $\gamma_Q(b)\in\partial\Omega_E$,
by \eqref{eq:eq10.11} and by the conservation law of the energy for
$\gamma_k$, one would have
\[
b L_V^2(\gamma_k)=\int_0^b\frac12\big(E-V(\gamma_k)\big)g\big(\dot\gamma_k,\dot\gamma_k\big)\,\text
d\tau\longrightarrow 0,
\]
because $\gamma_k$ is a minimizer, and therefore  there would exist a curve $c_k$ joining $\partial \Omega_E$ with $\gamma_k(b)$ in $\overline \Omega_k$ such that
\[
\int_0^b\frac12\big(E-V(\gamma_k)\big)g\big(\dot\gamma_k,\dot\gamma_k\big)\,\text
d\tau \leq \int_0^b\frac12\big(E-V(c_k)\big)g\big(\dot c_k,\dot c_k\big)\,\text
d\tau \longrightarrow 0.
\]
Bu then, $L_V^2(\gamma_k) \to 0$
contradicting $Q\not\in\partial\Omega_E$.

Moreover, $\gamma_Q$
satisfies \eqref{eq:i} in $[0,1]$ since it is a $H^1$--weak limit
of $\gamma_k$, which is a sequence of $g_*$--geodesics.

Clearly, $\gamma_Q$ is of class $C^2$ on $\left]0,1\right]$,
because of the convergence on each interval $\left[0,b\right]$, $b>0$.

Finally, since $L_V(\gamma_Q)\le\liminf\limits_{k\to\infty}L_V^2(\gamma_k)$, from
\eqref{eq:eq10.11} it follows that $L_V(\gamma_Q)=d_V(Q)$, and this
concludes the proof.
\end{proof}

\begin{rem}\label{rem:rem3.3}
It is immediate to see that, $\gamma_Q$ is a minimizer as in Lemma
\ref{thm:lem8.2} if and only if it is a minimizer for the functional
\begin{equation}\label{eq:funct01}
f_{0,1}(x)=\int_0^1\frac12\big(E-V(x)\big)g\big(\dot x,\dot
x\big)\,\text dt
\end{equation}
in the space of curves $X_Q$.
Then, by Lemma \ref{thm:lem8.2}, $f_{0,1}$ has at least one
minimizer on $X_Q$.
\end{rem}

Using a simple argument, we also have:
\begin{lem}\label{thm:lem84}
The map $d_V:\Omega_E\to\left[0,+\infty\right[$ defined in the
statement of Lemma~\ref{thm:lem8.2} is continuous, and it admits a
continuous extension to $\overline{\Omega}_E$ by setting $d_V=0$
on $\partial\Omega_E$.\qed
\end{lem}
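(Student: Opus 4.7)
The plan is to establish continuity in two separate regimes: a local Lipschitz bound on the open set $\Omega_E$, and a quantitative decay $d_V(Q) \to 0$ as $Q$ approaches $\partial \Omega_E$. Both inequalities will come from the same basic construction, namely concatenating a minimizer for one endpoint with a short connecting curve.

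For $Q \in \Omega_E$ I fix a ball $B$ around $Q$ whose $g$-closure lies in $\Omega_E$. Given $Q' \in B$, let $\gamma_Q \in X_Q$ be a minimizer provided by Lemma~\ref{thm:lem8.2}, and let $\sigma$ be a minimal $g$-geodesic joining $Q$ to $Q'$ inside $B$. The concatenation of $\gamma_Q$ with $\sigma$, suitably reparametrized on $[0,1]$, lies in $X_{Q'}$, so
\[
d_V(Q') \leq d_V(Q) + L_V(\sigma) \leq d_V(Q) + C_B \, \mathrm{dist}(Q,Q'),
\]
where $C_B = \sqrt{\tfrac12 \sup_{\overline B}(E-V)} < \infty$. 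Exchanging the roles of $Q$ and $Q'$ gives $|d_V(Q) - d_V(Q')| \leq C_B \, \mathrm{dist}(Q,Q')$, proving local Lipschitz, hence continuous, behavior on $\Omega_E$.

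For the boundary extension, fix $Q_0 \in \partial \Omega_E = V^{-1}(E)$. Since $E$ is a regular value of $V$, $\nabla V(Q_0) \neq 0$, so in a neighborhood of $Q_0$ the set $\partial \Omega_E$ is a smooth hypersurface with a well-defined orthogonal projection $\pi$. For $Q \in \Omega_E$ near $Q_0$, set $\rho(Q) = \mathrm{dist}(Q,\partial\Omega_E)$ and let $c$ be the $g$-arclength parametrized minimizing $g$-segment running from $\pi(Q)$ to $Q$ along the inward normal through $\pi(Q)$. A first-order Taylor expansion of $V$ at $\pi(Q)$ along this direction yields $E - V(c(s)) \leq K s$ for $s \in [0,\rho(Q)]$, with $K$ depending only on a neighborhood of $Q_0$. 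Hence $c$ (reparametrized on $[0,1]$) belongs to $X_Q$ and
\[
d_V(Q) \leq L_V(c) = \int_0^{\rho(Q)} \sqrt{\tfrac12 (E-V(c(s)))}\, \mathrm ds \leq \tfrac{2}{3}\sqrt{\tfrac K 2}\,\rho(Q)^{3/2},
\]
which tends to zero as $Q \to Q_0$.

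The principal subtlety is that $g_*$ degenerates on $\partial \Omega_E$, so the Lipschitz constant $C_B$ diverges as $B$ approaches the boundary and a naive interior argument cannot be pushed all the way to $\partial \Omega_E$. The degeneracy is tamed, however, by $E$ being a regular value: this produces a linear vanishing of $E-V$ along the normal and hence the integrable square-root singularity of the integrand responsible for the $\rho^{3/2}$ estimate above, in the same spirit as the asymptotics discussed in Remark~\ref{E-Vat-the-boundary}. Combining this vanishing rate with the interior Lipschitz bound produces a function continuous on all of $\overline{\Omega}_E$.
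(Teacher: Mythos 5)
Your proof is correct. The paper gives no argument for this lemma (it is stated as following from ``a simple argument'' with the \qed in the statement), and your two-step scheme — local Lipschitz continuity in $\Omega_E$ by concatenating a minimizer for one endpoint with a short minimal $g$-segment contained in a ball away from the boundary, plus the bound $d_V(Q)\le \tfrac23\sqrt{K/2}\,\rho(Q)^{3/2}$ near $\partial\Omega_E$ coming from $E-V(c(s))\le Ks$ along a normal segment — is a complete and correct instantiation of exactly that kind of elementary argument; the $\rho^{3/2}$ decay is moreover consistent with the $(E-V)\sim s^{2/3}$ boundary asymptotics noted in Remark~\ref{E-Vat-the-boundary}.
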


\begin{lem}\label{lem:convminim}
Suppose that there is a unique minimizer $\gamma_Q$ between $V^{-1}(E)$ and $Q\in \Omega_E$. Consider $Q_n \rightarrow Q$ and let $\gamma_{Q_n}$
be a sequence of minimizers
between $V^{-1}(E)$ and $Q_n$. Then $\gamma_{Q_n} \rightarrow \gamma_Q$ in $H^1\big([0,1],\mathds{R}^N\big)$ and in $C^2([a,1],\mathds{R}^N)$ for any $a \in ]0,1[$.
\end{lem}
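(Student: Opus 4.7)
The plan is to extract a limit of $\gamma_{Q_n}$ along the strategy already carried out in Lemma~\ref{thm:lem8.2}, use uniqueness of $\gamma_Q$ to identify this limit, and then upgrade weak $H^1$-convergence to strong $H^1$- and $C^2$-convergence on interior intervals. First I would invoke continuity of $d_V$ (Lemma~\ref{thm:lem84}) to get $L_V(\gamma_{Q_n})=d_V(Q_n)\to d_V(Q)$; in particular the constants $c_n$ given by $(E-V(\gamma_{Q_n}))g(\dot\gamma_{Q_n},\dot\gamma_{Q_n})\equiv 2L_V(\gamma_{Q_n})^2$ are uniformly bounded. By Remark~\ref{E-Vat-the-boundary}, the rescaling time $T_n=\int_0^1\frac{\sqrt{c_n}}{E-V(\gamma_{Q_n})}\,\mathrm ds$ is then uniformly bounded, hence from the identity analogous to \eqref{eq:gbound} one gets a uniform bound on $\int_0^1 g(\dot\gamma_{Q_n},\dot\gamma_{Q_n})\,\mathrm ds$. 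Thus $\{\gamma_{Q_n}\}$ is bounded in $H^1([0,1],\R^N)$, and a subsequence converges weakly in $H^1$ and uniformly on $[0,1]$ to some $\gamma_*\in H^1([0,1],\R^N)$.

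Next I would verify that $\gamma_*\in X_Q$. Uniform convergence gives $\gamma_*(1)=Q$ and $\gamma_*(0)\in\partial\Omega_E$ (because $\partial\Omega_E$ is closed and $\gamma_{Q_n}(0)\in\partial\Omega_E$). The fact that $\gamma_*(]0,1])\subset\Omega_E$ is the delicate point, but it is handled exactly as in the proof of Lemma~\ref{thm:lem8.2}: if $\gamma_*(b)\in\partial\Omega_E$ for some $b\in\,]0,1]$, energy conservation forces $bL_V(\gamma_{Q_n})^2\to 0$, contradicting $d_V(Q)>0$. Passing to the weak limit in \eqref{eq:i} shows that $\gamma_*$ satisfies the Euler--Lagrange equation, and weak lower semicontinuity of $f_{0,1}$ (valid since $E-V(\gamma_{Q_n})\to E-V(\gamma_*)$ uniformly and $\dot\gamma_{Q_n}\rightharpoonup\dot\gamma_*$ in $L^2$), combined with Remark~\ref{rem:rem3.3} and $L_V(\gamma_{Q_n})\to d_V(Q)$, gives $L_V(\gamma_*)\le d_V(Q)$, i.e.\ $\gamma_*$ is a minimizer. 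Uniqueness of the minimizer forces $\gamma_*=\gamma_Q$, and by a standard Urysohn-type argument the whole sequence converges (not just a subsequence).

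To promote weak to strong $H^1$-convergence I would observe that
\[
\int_0^1\tfrac12(E-V(\gamma_{Q_n}))g(\dot\gamma_{Q_n},\dot\gamma_{Q_n})\,\mathrm ds
\;=\;L_V(\gamma_{Q_n})^2\;\longrightarrow\;L_V(\gamma_Q)^2,
\]
while $(E-V(\gamma_{Q_n}))\to (E-V(\gamma_Q))$ uniformly and is bounded below by a positive constant on every compact subset of $\Omega_E$; the usual convexity/strict-convexity argument for the kinetic term then yields $\dot\gamma_{Q_n}\to\dot\gamma_Q$ strongly in $L^2$, hence strong $H^1$-convergence. For the $C^2$-statement, fix $a\in\,]0,1[$. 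Uniform convergence of $\gamma_{Q_n}$ to $\gamma_Q$ and compactness of $\gamma_Q([a,1])\subset\Omega_E$ ensure that for $n$ large $\gamma_{Q_n}([a,1])$ lies in a fixed compact subset of $\Omega_E$ on which $E-V\ge\delta>0$. Rewriting \eqref{eq:pr1} as $\Dds\dot\gamma_{Q_n}=F(\gamma_{Q_n},\dot\gamma_{Q_n})$ with $F$ of class $C^1$ on this compact set, the uniform bound on $\dot\gamma_{Q_n}$ coming from energy conservation gives a uniform $C^2$-bound on $[a,1]$; Ascoli--Arzel\`a and continuous dependence on initial data for the ODE then force $\gamma_{Q_n}\to\gamma_Q$ in $C^2([a,1],\R^N)$.

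I expect the main obstacle to be the nonescape step $\gamma_*(]0,1])\subset\Omega_E$, since $g_*$ degenerates on $\partial\Omega_E$ and a priori nothing prevents the weak limit from touching the boundary in the interior; this is resolved by repeating verbatim the boundary-touch argument already used in the proof of Lemma~\ref{thm:lem8.2} together with the uniform control on $c_n$ provided by continuity of $d_V$.
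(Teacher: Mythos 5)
Your overall scheme (uniform $H^1$ bound via Remark~\ref{E-Vat-the-boundary}, extraction of a weak $H^1$/uniform limit, identification of the limit with $\gamma_Q$ by lower semicontinuity plus uniqueness, and $C^2$-convergence on $[a,1]$ from the ODE \eqref{eq:pr1}) matches the paper's proof in substance; using continuity of $d_V$ in place of the paper's explicit competitor $\hat\gamma_k$ is a harmless variant. The genuine gap is in the last step, the upgrade from weak to strong $H^1$-convergence. Your ``usual convexity argument'' uses that $E-V(\gamma_{Q_n})\to E-V(\gamma_Q)$ uniformly and is bounded below on compact subsets of $\Omega_E$; but $\gamma_Q(0)\in\partial\Omega_E$, the weight behaves like $s^{2/3}$ at $s=0$, and by the conservation law $\Vert\dot\gamma_{Q_n}(s)\Vert^2=2L_V(\gamma_{Q_n})^2/(E-V(\gamma_{Q_n}(s)))$ blows up like $s^{-2/3}$ exactly there. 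Convergence of the weighted energies $L_V(\gamma_{Q_n})^2\to L_V(\gamma_Q)^2$ together with weak $L^2$-convergence of the derivatives only yields strong convergence in the weighted space $\int_0^1(E-V(\gamma_Q))\,g(\dot\gamma_{Q_n}-\dot\gamma_Q,\dot\gamma_{Q_n}-\dot\gamma_Q)\,\mathrm ds\to 0$ (equivalently, strong convergence on $[\epsilon,1]$, which you already have from the $C^2$-statement); it does not exclude concentration of unweighted Dirichlet energy at $s=0$, which is precisely what must be ruled out to get convergence in $H^1([0,1])$.

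Concretely, since $\int_0^1 g(\dot\gamma_{Q_n},\dot\gamma_{Q_n})\,\mathrm ds=2L_V(\gamma_{Q_n})\,t_n(1)$ with $t_n(1)=\int_0^1 L_V(\gamma_{Q_n})\,(E-V(\gamma_{Q_n}))^{-1}\mathrm ds$, the missing point is to show that $t_n(1)$ converges to $\int_0^1 L_V(\gamma_Q)(E-V(\gamma_Q))^{-1}\mathrm ds$, i.e.\ a uniform-in-$n$ integrable control of $(E-V(\gamma_{Q_n}(s)))^{-1}$ near $s=0$ (something like $E-V(\gamma_{Q_n}(s))\ge c\,s^{2/3}$ with $c$ independent of $n$). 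The paper supplies exactly this: it passes to the Maupertuis reparameterizations $q_{n}$ and $q$ solving \eqref{eq:eqbo}, uses uniqueness of $\gamma_Q$ to get $\dot q_{n}(0)\to\dot q(0)$, and then continuous dependence on the Cauchy data gives the convergence of the reparameterization times \eqref{eq:strongconv}, hence of the $L^2$-norms of the derivatives and so strong $H^1$-convergence. Your proposal needs this (or an equivalent uniform comparison estimate near the boundary in the spirit of Lemma~\ref{orthcond} and Remark~\ref{E-Vat-the-boundary}); as written, the convexity step does not deliver the claimed conclusion.
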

\begin{proof}
Consider the map
\begin{equation*}\label{eq:seqn}
t_n(s)=\int_0^s\frac{L_V(\gamma_{Q_n})\mathrm d\tau}{E-V(\gamma_{Q_n}(\tau))}.
\end{equation*}
By Remark \ref{E-Vat-the-boundary} we deduce the boundness of $t_n(1)$, so $\int_0^1 g(\dot \gamma_{Q_n},\dot \gamma_{Q_n})$ is bounded in $H^1$.
Suppose by contradiction (up to considering a subsequence) that
\begin{equation}\label{eq:nonconv}
\gamma_{Q_n} \text{ does not converge to }\gamma_Q \text{ with respect in the $H^1$ topology},
\end{equation}
and consider a subsequence $\gamma_{{Q_n}_k}$ which converges to some curve $\gamma_*$ uniformly, and weakly  in $H^1$.
Now, if $L_V$ is the functional \eqref{eq:Vlenght}, we have
\[
L_V(\gamma_*) \leq \liminf_{n \to +\infty}L_V(\gamma_{{Q_n}_k}).
\]
Now consider the minimizer $\gamma_Q$ and denote by $\hat \gamma_k$ the $H^1$--curve parametrized in $[0,1]$ joining $Q_{n_k}$ with $Q$ by a minimal $g$--geodesic
parameterized in the interval $[0,\text{dist }(Q_{n_k},Q)]$ and which coincides with the affine reparameterization of $\gamma_Q$ in the interval $[\text{dist }(Q_{n_k},Q),1]$. Clearly
\[
L_V(\gamma_{{Q_n}_k}) \leq L_V(\hat \gamma_k),
\]
and since $L_V(\hat \gamma_k) \rightarrow L_V(\gamma_Q) = d_V(Q)$ we deduce
\[
L_V(\gamma_*) \leq d_V(Q),
\]
and the uniqueness of the minimizer gives
\[
\gamma_*=\gamma_Q.
\]
Now by \eqref{eq:pr1}, $\dot \gamma_{Q_{n_k}}$ is bounded in $H^1_{loc}(]0,1],\mathds{R}^N)$, so, again by \eqref{eq:pr1}, we obtain
the convergence in $C^2([a,1])$ for any $a \in ]0,1[$.

It remains to prove that $\gamma_{{Q_n}_k}$ converges strongly in $H^1$ (to the curve $\gamma_Q$). By the weak convergence, it will
suffice to prove that
\[
\int_0^1 g(\dot \gamma_{{Q_n}_k}, \dot \gamma_{{Q_n}_k})\,\mathrm ds \longrightarrow \int_0^1 g(\dot \gamma_{Q}, \dot \gamma_{Q})\,\mathrm ds,\quad \text{as $k\to\infty$},
\]
and therefore we only need to show that
\begin{equation}\label{eq:strongconv}
t_{{n_k}}(1) \longrightarrow \int_0^1\frac{\mathrm d\tau}{E-V(\gamma_{Q}(\tau))},\quad \text{as $k\to\infty$}.
\end{equation}
To this end, consider $q_{n_k}$ and $q$, the curves obtained by the Maupertuis principle, reparameterizing $\gamma_{{Q_n}_k}$ and $\gamma_Q$ so that they satisfies \eqref{eq:eqbo}, $q_{n_k}(0)=Q_{n_k}$ and $q(0)=Q$. By the uniqueness of the minimizer $\gamma_Q$ it must be
\[
\dot q_{n_k}(0) \rightarrow \dot q(0)
\]
and continuity by the initial data in the Cauchy problem gives \eqref{eq:strongconv}, because $t_{n_k}(1)$ and $t_Q$ are uniquely determined by the relations $V(q_{n_k}(t_{n_k}))=E$ and $V(q(t_Q))=E$ respectively.
\end{proof}

\begin{prop}\label{prop:prop3.6}
Suppose that the minimizer $\gamma_Q$ between $V^{-1}(E)$ and $Q$ is unique.
Then, $d_V$ is differentiable at $Q$ and
\begin{equation}\label{eq:grad-dv}
\nabla d_V(Q)=\frac{(E-V(Q))}{2d_V(Q)}\dot\gamma_Q(1).
\end{equation}

\end{prop}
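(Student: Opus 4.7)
The strategy is a sandwich argument for $d_V(Q+hv)^2-d_V(Q)^2$ in each direction $v\in\R^N$: I would show that
\[
\lim_{h\to 0}\frac{d_V(Q+hv)^2-d_V(Q)^2}{h} \;=\; (E-V(Q))\,g(\dot\gamma_Q(1),v),
\]
and then divide by $2\,d_V(Q)>0$ to obtain \eqref{eq:grad-dv}. Since the right-hand side is linear in $v$ and $d_V$ is continuous by Lemma~\ref{thm:lem84}, this establishes differentiability at $Q$ in the usual sense.

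The first ingredient is the first variation of $f_{0,1}$ at a minimizer. Because $\tfrac12(E-V(\gamma_Q))g(\dot\gamma_Q,\dot\gamma_Q)\equiv c_{\gamma_Q}$ is constant along $\gamma_Q$, the Cauchy--Schwarz inequality is sharp in \eqref{eq:dV} and $d_V(Q)^2=f_{0,1}(\gamma_Q)$. Integrating \eqref{eq:diffmaupint} by parts and using that $\gamma_Q$ solves \eqref{eq:pr1} reduces the differential to the boundary terms $\big[(E-V(\gamma_Q))g(\dot\gamma_Q,\xi)\big]_0^1$. The contribution at $s=0$ vanishes because $(E-V(\gamma_Q(s)))\Vert\dot\gamma_Q(s)\Vert=\sqrt{2c_{\gamma_Q}(E-V(\gamma_Q(s)))}\to 0$ by Remark~\ref{E-Vat-the-boundary}, while $\xi$ stays bounded. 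Hence
\[
df_{0,1}(\gamma_Q)[\xi]=(E-V(Q))\,g(\dot\gamma_Q(1),\xi(1))
\]
for every $\xi\in H^1([0,1],\R^N)$, and the same identity holds at any nearby minimizer.

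For the upper and lower bounds I would fix a smooth cutoff $\phi:[0,1]\to[0,1]$ with $\phi(0)=0$, $\phi(1)=1$, supported in $[1-\delta,1]$ for $\delta$ so small that $\gamma_Q([1-\delta,1])$ lies in a compact subset of $\Omega_E$. The variation $\Gamma_h(s):=\gamma_Q(s)+h\phi(s)v$ lies in $X_{Q+hv}$ for $|h|$ small, and Taylor expansion in $h$ (controlled because $\phi v$ is smooth and supported where $\gamma_Q$ is smooth) yields
\[
d_V(Q+hv)^2 \leq f_{0,1}(\Gamma_h) = d_V(Q)^2 + h\,(E-V(Q))\,g(\dot\gamma_Q(1),v) + O(h^2).
\]
For the reverse inequality I would invoke the uniqueness assumption and Lemma~\ref{lem:convminim} to get $\gamma_{Q+hv}\to\gamma_Q$ in $H^1$ and in $C^2([1-\delta,1])$; the curve $\tilde\gamma_h(s):=\gamma_{Q+hv}(s)-h\phi(s)v$ then lies in $X_Q$, and the first variation formula applied at the geodesic $\gamma_{Q+hv}$ gives
\[
d_V(Q)^2 \leq f_{0,1}(\tilde\gamma_h) = d_V(Q+hv)^2 - h\,(E-V(Q+hv))\,g(\dot\gamma_{Q+hv}(1),v) + O(h^2).
\]
Combining the two inequalities and using $\dot\gamma_{Q+hv}(1)\to\dot\gamma_Q(1)$ pinches the desired limit.

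The step I expect to be most delicate is the vanishing of the boundary term at $s=0$ in the first variation: although $\Vert\dot\gamma_Q(s)\Vert\to+\infty$ as $s\to 0^+$, the conformal factor $E-V(\gamma_Q(s))$ decays as $s^{2/3}$ by Remark~\ref{E-Vat-the-boundary} and absorbs the blow-up. This is precisely the point where the degenerate conformal factor makes the endpoint on $\partial\Omega_E$ behave as a ``collapsed'' fixed point, and it is what lets the whole calculation proceed as in a classical fixed-endpoint Riemannian problem.
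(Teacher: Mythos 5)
Your construction is essentially the paper's: an upper bound obtained by perturbing $\gamma_Q$ with a variation supported near $s=1$ (the paper uses $v_\xi(s)=(2s-1)^+\xi$ where you use a smooth cutoff $\phi$), and a lower bound obtained by shifting the minimizer $\gamma_{Q+hv}$ back into $X_Q$, controlling the second-order remainder on the tail interval where the curves stay uniformly away from $V^{-1}(E)$, and using uniqueness together with Lemma~\ref{lem:convminim} to get $\dot\gamma_{Q+hv}(1)\to\dot\gamma_Q(1)$. Your extra observation that the full first-variation formula holds with vanishing boundary term at $s=0$ (since $(E-V(\gamma_Q))\Vert\dot\gamma_Q\Vert=\sqrt{2c_{\gamma_Q}}\sqrt{E-V(\gamma_Q)}\to0$) is correct and consistent with Remark~\ref{rem:punti-critici}, though it is not needed once the variation vanishes near $s=0$.

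The one genuine gap is the final step: existence of directional derivatives of $\psi=d_V^2$ that are linear in $v$, together with mere continuity of $d_V$ (Lemma~\ref{thm:lem84}), does \emph{not} imply differentiability; there are continuous functions on $\R^2$ with linear G\^ateaux derivative at a point that fail to be Fr\'echet differentiable there (e.g.\ $x^3y/(x^4+y^2)$ at the origin). What is needed, and what the paper proves, is that the two inequalities \eqref{eq:eq3.13} and \eqref{eq:eq3.17} hold \emph{uniformly} for directions in the unit ball $\Vert\xi\Vert\le1$; with that uniformity the sandwich gives the Fr\'echet differential. Your estimates can be upgraded to this: the $O(h^2)$ terms are uniform in $v$ because the tails $\gamma_{Q+hv}\vert_{[1-\delta,1]}$ are uniformly controlled (by the conservation law, or by applying Lemma~\ref{lem:convminim} along sequences $Q+h_nv_n\to Q$ with $\Vert v_n\Vert\le1$), and the convergence $\dot\gamma_{Q+hv}(1)\to\dot\gamma_Q(1)$ is likewise uniform on the unit ball by the same compactness argument. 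Alternatively, one may note that $d_V$ is locally Lipschitz near $Q$ (triangle inequality for the $g_*$--length, since $E-V$ is bounded away from $0$ near $Q$), and for locally Lipschitz functions on $\R^N$ G\^ateaux differentiability does imply differentiability; either repair closes the argument, but as written the concluding sentence overclaims.
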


\begin{proof}
Set
\[
\psi_V(Q)=d_V(Q)^2.
\]
We have just to prove that $\psi$ is differentiable
at $Q$ and
\begin{equation}\label{eq:dpsi}
\nabla \psi_V(Q)=\big(E-V(Q)\big)\dot\gamma_Q(1).
\end{equation}
Given the local nature of the result, we can use local charts around $Q$ and
recall that $M$ is topologically embedded as an open subset of $\R^N$.
Consider $\xi \in \mathds{R}^N$ and
\[
v_\xi(s)=(2s-1)^+\xi,
\]
where $(\cdot)^+$ denotes the positive part.
Because of the behaviour of $\gamma_Q$,
for $\varepsilon$
sufficiently small (with respect to $\xi$) the curve
$\gamma_Q(s)+\varepsilon v_\xi(s)$ belongs to
$X_{Q+\varepsilon\xi}$ (see \eqref{eq:Xspace}).

Now $\gamma_Q$ is a minimizer in $X_Q$ also for
\[
\int_0^1\frac12(E-V(x))g(\dot x, \dot x)\,\mathrm ds \equiv f_{0,1}(x),
\]
so
\[
\psi(Q+\varepsilon\xi)\le f_{0,1} (\gamma_Q+\varepsilon v_\xi)
\]
and therefore
\[
\psi(Q+\varepsilon\xi)-\psi(Q)\le f_{0,1}(\gamma_Q+\varepsilon
v_\xi)-f_{0,1}(\gamma_Q).
\]
Now
\begin{multline*}
\lim\limits_{\varepsilon\to 0} \frac{1}{\varepsilon}\left(
f_{0,1}(\gamma_Q+\varepsilon v_\xi)-f_{0,1}(\gamma_Q)\right)=\\
\int_0^1\big(E-V(\gamma_Q)\big)g\big(\dot\gamma_Q,\Ddt
v_\xi\big)-\frac 12 g\big(\nabla
V(\gamma_Q),v_\xi\big)g\big(\dot\gamma_Q,\dot\gamma_Q\big)\,\text
ds
\end{multline*}
uniformly as $\Vert \xi \Vert\le 1$. Moreover, since $v_\xi=0$ in the
interval $[0,\frac 12]$, using the differential equation satisfied
by $\gamma_Q$ and integrating by parts gives
\begin{multline*}
\int_0^1\big(E-V(\gamma_Q)\big)g\big(\dot\gamma_Q,\Ddt
v_\xi\big)-\frac 12 g\big(\nabla
V(\gamma_Q),v_\xi\big)g\big(\dot\gamma_Q,\dot\gamma_Q\big)\,\text
ds=\\
\big(E-V(\gamma_Q(1))\big)g\big(\dot\gamma_Q(1),v_\xi(1)\big)=\big(E-V(Q)\big)
g\big(\dot\gamma_Q(0),\xi\big).
\end{multline*}
Therefore, uniformly as $\Vert\xi\Vert\le 1$,
\begin{equation}\label{eq:eq3.13}
\limsup_{\varepsilon\to 0^+}\frac 1\varepsilon
\left(\psi(Q+\varepsilon v_\xi)-\psi(Q)\right)-\big(E-V(Q)\big)
g(\dot\gamma_Q(1),\xi\big)\le 0.
\end{equation}
Moreover, since
$\psi(Q+\varepsilon\xi)=f_{0,1}(\gamma_{Q+\varepsilon\xi})$ and
$\psi(Q)\le f_{0,1}(\gamma_{Q+\varepsilon\xi}-\varepsilon v_\xi)$
one has
\begin{multline}\label{eq:eq3.14}
\psi(Q+\varepsilon\xi)-\psi(Q)\ge
f_{0,1}(\gamma_{Q+\varepsilon\xi})-f_{0,1}(\gamma_{Q+\varepsilon\xi}-\varepsilon
v_\xi)=\\
\varepsilon \langle
f'_{0,1}(\gamma_{Q+\epsilon\xi}),v_\xi\rangle_1-\frac{\varepsilon^2}{2}\langle
f''_{0,1}(\gamma_{Q+\varepsilon\xi}-\vartheta_\varepsilon
\varepsilon v_\xi)[v_\xi],v_\xi\rangle_1,
\end{multline}
for some $\vartheta_\varepsilon\in ]0,1[$. Here
$\langle\cdot,\cdot\rangle_1$ denotes the standard inner product
in $H^1$ and $f'$, $f''$ are respectively gradient and Hessian of $f$
with respect to $\langle\cdot,\cdot\rangle_1$.

Now, it is  $\gamma_{Q+\varepsilon\xi}(1)=Q+\varepsilon_\xi$ and
$Q\not\in V^{-1}(E)$. Moreover, by Lemma \ref{lem:convminim}, for all $l\delta>0$, there exists
$\varepsilon(\delta)>0$ such that
\[
\dist(\gamma_{Q+\varepsilon\xi}(s),\gamma_Q(s))\le\delta\qquad\text{for
any\ } \varepsilon\in ]0,\varepsilon(\delta)],\,\Vert \xi \Vert\le
1,\,s\in[0,1].
\]
Then, since $\gamma_Q$ is uniformly far from $V^{-1}(E)$ on the
interval $[\frac 12,1]$, the same holds for
$\gamma_{Q+\varepsilon\xi}$ provided that $\varepsilon$ is small and
$\Vert\xi\Vert\le 1$. Thus, recalling the definition of $d_V$ in \eqref{eq:dV} of Lemma
\ref{thm:lem8.2}, the conservation law satisfied by the minimizer
$\gamma_{Q+\varepsilon\xi}$ is
\[
\frac12\big(E-V(\gamma_{Q+\varepsilon\xi})\big)
g\big(\dot\gamma_{Q+\varepsilon\xi},\dot\gamma_{Q+\varepsilon\xi}\big)=
d_E^2(y+\varepsilon\xi).
\]
This implies the existence of a constant $C>0$ such that
\[
\int_{\frac12}^1 g\big(\dot\gamma_{Q+\varepsilon\xi},\dot\gamma_{Q+\varepsilon\xi}\big)
\,\text ds\le C
\]
for any $\varepsilon$ sufficiently small and $\Vert\xi\Vert\le 1$.

Therefore $\langle
f''_{0,1}(\gamma_{Q+\varepsilon\xi}-\vartheta_\varepsilon\varepsilon
v_\xi)[v_\xi],v_\xi\rangle_1$ is uniformly bounded with respect to
$\varepsilon$ small and $\Vert\xi\Vert\le 1$, since  $v_\xi=0$ on $[0,\frac
12]$, and by \eqref{eq:eq3.14} we get
\begin{equation}\label{eq:eq3.15}
\lim\limits_{\varepsilon\to 0}\frac
1\varepsilon\left(f_{0,1}(\gamma_{Q+\varepsilon\xi})-f_{0,1}(\gamma_{Q+\varepsilon\xi-\varepsilon
v_\xi})\right)=\lim\limits_{\varepsilon\to 0}\langle
f'_{0,1}(\gamma_{Q+\varepsilon\xi}),v_\xi\rangle_1
\end{equation}
uniformly as $\Vert\xi\Vert\le 1$.

Now, using the differential equation \eqref{eq:pr1} satisfied by
$\gamma_{Q+\varepsilon\xi}$ and integrating by parts one obtains
\[
\langle f'_{0,1}(\gamma_{Q+\varepsilon\xi}),v_\xi\rangle_1=
\big(E-V(Q+\varepsilon\xi)\big)g\big(\dot\gamma_{Q+\varepsilon\xi}(1),\xi\big),
\]
while, since the minimizer is unique, by Lemma \eqref{lem:convminim},
\begin{equation}\label{eq:eq3.16}
\lim\limits_{\varepsilon\to
0}\gamma_{Q+\varepsilon\xi}(1)=
\dot\gamma_Q(1)
\end{equation}
uniformly as $\Vert\xi\Vert\le 1$. Therefore, by
\eqref{eq:eq3.14}--\eqref{eq:eq3.16} it is
\begin{equation}\label{eq:eq3.17}
\lim\inf\limits_{\varepsilon\to 0}\frac
1\varepsilon\left(\psi(Q+\varepsilon\xi)-\psi(Q)\right)-
\big(E-V(Q)\big)g\big(\dot\gamma_y(1),\xi\big)\ge 0
\end{equation}
uniformly as $\Vert\xi\Vert\le 1$. Finally, combining \eqref{eq:eq3.13}
and \eqref{eq:eq3.17} we obtain \eqref{eq:dpsi}.
\end{proof}

\begin{rem}\label{rem:reg-near-boundary}
Since $g$ and $V$ are of class $C^2$, if $Q_0$ is sufficiently close to the boundary, then any $Q$ close
to $Q_0$ satisfies assumptions of Proposition~\ref{prop:prop3.6}, and the map $d_V$ is of class $C^2$ in a neighborhood of $Q_0$.

Indeed, denote by $q(t,x)$ the solution of the Cauchy problem \eqref{eq:eqbo} with $q(0)=x \in V^{-1}(E)$.
We have
\begin{equation}\label{eq:bo(tx)}
\dot q(t,x) = -\int_0^t\nabla V\big(q(s,x)\big)\,\mathrm ds, \;
q(t,x)=x - \int_0^t(t-s)\nabla V\big(q(s,x)\big)\,\mathrm ds,
\end{equation}
from which we deduce the $C^1$--regularity of $q(t,x)$ and $\dot q(t,x)$. Now setting $\tau=t^2$ the map
\[
q(\tau,x)=x - \int_0^{\sqrt{\tau}}(\sqrt{\tau}-\sigma)\nabla V\big(q(\sigma,x)\big)\,\mathrm d\sigma
\]
is of class $C^1$ in $(\tau,x)$, while, for every $x \in V^{-1}(E)$, $\frac{\partial q}{\partial x}(0,x) $ is the identity map and
$\frac{\partial q}{\partial \tau}(0,x)= -\frac12\nabla V(x)$. Then, $q$ is a $C^1$--diffeomorphism defined
on $\left[0,\tau_0\right[ \times V^{-1}(E)$ for a suitable $\tau_0$ sufficiently small, and  Maupertuis' principle
gives the existence of $\bar \epsilon > 0$ sufficiently small such that for any $Q$ satisfying $E-\bar\epsilon \leq V(Q) \leq E$
there exists a unique minimizer $\gamma_Q$.

We can then apply Proposition \ref{prop:prop3.6}, obtaining the differentiability of $d_V$ and formula \eqref{eq:grad-dv}.
Finally, denote by $Q\mapsto\big(\tau(Q),x(Q)\big)$ the inverse of the map $q(\tau,x)$ and observe that, by Maupertuis principle,
\[
q\big(t,x(Q)\big)=\gamma_Q(s) \text{ and }t(s)=d_V^2(Q)\int_0^s\frac{\mathrm dr}{E-V(\gamma_Q(r))},\quad s\in[0,1].
\]
Thus, by \eqref{eq:grad-dv}
\[
\nabla d_V(Q)=\tfrac12 d_V(Q) \dot q\big(\sqrt{\tau(Q)},x(Q)\big),
\]
obtaining the $C^2$--regularity of the map $d_V$.
\end{rem}

\section{The Morse Index Theorem}\label{sec:nondegen}
In order to study conjugate points and Jacobi fields for geodesics joining $V^{-1}(E)$ with $Q \in \Omega_E=V^{-1}\big(\left]-\infty,E\right[\big)$ we have to consider the geodesics action functional
\begin{equation}\label{eq:deffepsilon}
f(\gamma)=\frac12\int_0^1\big(E-V(\gamma(s))\big)g(\dot\gamma(s),\dot\gamma(s))\,\mathrm ds,
\end{equation}
which is a $C^2$-functional defined in the space $X_Q$ consisting of all absolutely continuous curves $\gamma:[0,1]\to M$
satisfying:
\begin{equation}\label{eq:H12}
 \int_0^1g(\dot\gamma,\dot\gamma)\,\mathrm dt<+\infty;
\end{equation}

\begin{equation}\label{eq:boundarycond}
\gamma(0) \in V^{-1}(E),\quad \gamma(1) = Q.
\end{equation}

The abstract analytical structure of the above variational problem is well known. The space $X_Q$ has the structure of an infinite
dimensional Hilbert manifold; for $\gamma\in X_Q$, the tangent space is identified by
\[
T_\gamma X_Q =\big\{\xi \text{ vector field of class }H^{1,2} \text{ along }\gamma:
\xi(0)\in T_{\gamma(0)}V^{-1}(E), \; \xi(1) =0\big\},
\]
(where $T_qM$ is the tangent space of $M$ at $q$),
and its natural Hilbert structure is given by
\begin{equation}\label{eq:HiSt}
\langle \xi,\xi \rangle = \int_0^1 g\big(\tfrac{\mathrm D}{\mathrm ds}\xi,\tfrac{\mathrm D}{\mathrm ds}\xi\big)\,\mathrm ds.
\end{equation}

\begin{rem}\label{rem:punti-critici}
The functional $f$ is smooth on $X_Q$, and we have:
\begin{equation}\label{eq:f'}
\mathrm df(\gamma)[\xi]=\int_0^1(E-V(\gamma))g\big(\tfrac{\mathrm D}{\mathrm ds}\xi,\dot \gamma\big) -\tfrac12g\big(\nabla V(\gamma),\xi\big)g(\dot \gamma,\dot \gamma)\,\mathrm ds,
\end{equation}
because $\dot \gamma \in L^2$. The critical points are geodesics
relatively to the metric \eqref{eq:Ejacobi} (and therefore they satisfy equation \eqref{eq:pr1}) with the boundary conditions \eqref{eq:boundarycond}.
Note that, since $Q \not\in V^{-1}(E)$, then $\gamma$ is a not constant curve. Thus, there exists a strictly positive real constant $c_\gamma$ such that
\[
\tfrac12\big(E-V(\gamma(s)\big)g\big(\dot \gamma(s), \dot \gamma(s)\big) \equiv c_\gamma
\]
In particular $\gamma(s) \in  \Omega_E$ for any $s \in ]0,1]$.

Note also that if $\gamma$ is a critical curve, partial integration in \eqref{eq:f'} does not give further conditions on $\gamma$ at $s=0$.
Indeed $E-V(\gamma(s))$ goes like $s^{2/3}$ as $s\to0$, so $\dot \gamma$ behaves like $s^{-1/3}$ as $s\to0$ and therefore
$(E-V(\gamma(s)))\dot \gamma(s)$ goes to $0$ as $s \to 0$. However there is an ``automatic" orthogonality property as pointed out in Lemma \ref{orthcond}.
\end{rem}
\medskip
If $\gamma$ is a Jacobi geodesic parameterized by arc length, the Maupertuis principle says that the relation between the arc parameter $s$ of $\gamma$ and the time that parameterized the curve $q$ is given by
\begin{equation}\label{eq:parametroTempo}
t(s)=\int_0^s \frac{1}{E-V(\gamma(r))}\mathrm dr.
\end{equation}
Since $E-V(\gamma(s))$ asymptotically behaves like $s^{2/3}$ as $s\to 0$ we immediately deduce
\begin{cor}\label{normal-behavior}
There exists a positive constant $C_0$ such that
\[
 \frac{\dot \gamma(s)}{\Vert \dot \gamma(s)\Vert} = - \frac{\nabla V(\gamma(s))}{\Vert \nabla V(\gamma(s) \Vert} + \Sigma(s), \text{ with } \Vert \Sigma(s) \Vert \leq C_0\cdot s^{\frac13}.
\]
\end{cor}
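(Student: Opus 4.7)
The plan is to transfer the estimate of Lemma \ref{orthcond}, which governs the Newton-flow reparameterization $q(t)$, back to the arc-length parameter $s$ of $\gamma$ via the Maupertuis substitution \eqref{eq:parametroTempo}. Since $\gamma$ is parameterized by $g_*$-arc length, the conservation constant is normalized to $c_\gamma = 1$, so the reparameterization $t = t(s)$ of \eqref{eq:parametroTempo} is exactly the Maupertuis time, and $q(t) := \gamma(s(t))$ is a solution of $\frac{D}{dt}\dot q + \nabla V(q) = 0$ with $q(0) \in V^{-1}(E)$ and $\dot q(0) = 0$, as guaranteed by Lemma \ref{lem:map-class}.

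First I would observe that since $t'(s) = 1/(E - V(\gamma(s))) > 0$ on $\left]0,1\right]$, the chain rule $\dot\gamma(s) = \dot q(t(s))\,\sigma'(t(s))^{-1}$ implies that the unit tangent vectors coincide:
\[
\frac{\dot\gamma(s)}{\Vert \dot\gamma(s)\Vert} = \frac{\dot q(t(s))}{\Vert \dot q(t(s))\Vert}.
\]
Applying Lemma \ref{orthcond} at time $t(s)$ then yields
\[
\left\Vert \frac{\dot\gamma(s)}{\Vert \dot\gamma(s)\Vert} + \frac{\nabla V(\gamma(s))}{\Vert \nabla V(\gamma(s))\Vert}\right\Vert \leq C\, t(s),
\]
and the entire problem reduces to proving $t(s) \leq C' s^{1/3}$ for small $s$, with $C'$ uniform over all Jacobi arc-length geodesics issuing from $V^{-1}(E)$.

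For this last step I would invoke Remark \ref{E-Vat-the-boundary}: near the boundary the function $s \mapsto E - V(\gamma(s))$ behaves like the solution of $\dot y = 1/\sqrt{y}$ with $y(0) = 0$, hence like $s^{2/3}$ with a constant controlled by $\Vert\nabla V\Vert$ on $V^{-1}(E)$. Inserting this into \eqref{eq:parametroTempo} gives
\[
t(s) = \int_0^s \frac{\mathrm dr}{E - V(\gamma(r))} \leq \text{const}\int_0^s r^{-2/3}\,\mathrm dr \leq C'\, s^{1/3},
\]
and combining with the previous display produces the statement with $\Sigma(s) = \dot\gamma(s)/\Vert \dot\gamma(s)\Vert + \nabla V(\gamma(s))/\Vert \nabla V(\gamma(s))\Vert$ and $C_0 = CC'$.

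The main point requiring care is the \emph{uniform} character of the $s^{2/3}$ asymptotic for $E - V(\gamma(s))$: the constant must depend only on $g$, $V$ and $E$, not on the particular geodesic. This follows from the uniform estimate in Lemma \ref{orthcond}, from the compactness of $V^{-1}(]-\infty,E])$, from the normalization $c_\gamma = 1$, and from the fact that $\Vert\nabla V\Vert$ is bounded away from zero on a neighborhood of $V^{-1}(E)$, $E$ being a regular value. Once this uniformity is secured, the corollary is essentially a bookkeeping combination of Lemma \ref{orthcond} with the Maupertuis change of variable.
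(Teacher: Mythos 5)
Your argument is correct and is precisely the paper's intended deduction: the paper derives the corollary "immediately" from Lemma \ref{orthcond}, the Maupertuis reparameterization \eqref{eq:parametroTempo}, and the $s^{2/3}$ behavior of $E-V(\gamma(s))$ from Remark \ref{E-Vat-the-boundary}, which is exactly the chain of steps you spell out (unit tangents invariant under the orientation-preserving change of parameter, then $t(s)\leq C's^{1/3}$). Your added remarks on the uniformity of the constants are consistent with the uniform estimate \eqref{eq:uniform-est} and Remark \ref{E-Vat-the-boundary}, so there is no gap.
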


\begin{prop}\label{valutazione-hessiano}
Let $\gamma$ be a critical point of $f : X_Q \rightarrow \mathds{R}$. For any $\xi \in T_{\gamma}(X_Q)$, the Hessian $f''(\gamma)$ satisfies:
\begin{multline}\label{eq:hessiano}
f''(\gamma)[\xi,\xi]=\int_0^1-\frac12g(\dot \gamma, \dot \gamma) H^V(\gamma)[\xi,\xi] -2g(\nabla V(\gamma),\xi)g\big(\tfrac{\mathrm D}{\mathrm ds}\xi,\dot \gamma\big)\,\mathrm ds+\\
\int_0^1(E-V(\gamma))\Big[g\big(\tfrac{\mathrm D}{\mathrm ds}\xi,\tfrac{\mathrm D}{\mathrm ds}\xi\big)+g\big(R(\xi,\dot \gamma)\xi,\dot \gamma\big)\Big]\,\mathrm ds,
\end{multline}
where $R$ denotes the Riemann tensor for the metric $g$ and $H_{V}$ denotes the Hessian of $V$, namely
$\mathrm H^V(q)(v,w)=g\big((\nabla_v\nabla V)(q),w\big)$ for all $v,w\in
T_qM$ (equivalently,
$\mathrm H^\phi(q)(v,v)=\frac{\mathrm
d^2}{\mathrm ds^2}\big\vert_{s=0} \phi(\gamma(s))$, where
$\gamma:\left]-\varepsilon,\varepsilon\right[\to M$ is the unique
-- affinely parameterized -- geodesic in $M$ with $\gamma(0)=q$ and $\dot\gamma(0)=v$).
\end{prop}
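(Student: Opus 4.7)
The plan is to compute $f''(\gamma)[\xi,\xi]=\frac{\mathrm d^2}{\mathrm du^2}\big|_{u=0}f(\gamma_u)$ along a smooth variation $(u,s)\mapsto\gamma_u(s)$ with $\gamma_0=\gamma$, $\partial_u\gamma_u\big|_{u=0}=\xi$, and the constraints $\gamma_u(0)\in V^{-1}(E)$, $\gamma_u(1)=Q$ preserved for every $u$. Such a variation exists: since $\xi(0)\in T_{\gamma(0)}V^{-1}(E)$, one picks a smooth curve $u\mapsto\alpha(u)\in V^{-1}(E)$ with $\alpha'(0)=\xi(0)$ and interpolates to $Q$, using that $M$ is an open subset of $\R^N$. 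Let $W:=\nabla_u\partial_u\gamma_u\big|_{u=0}$ be the associated second-order variation field. Because $\gamma$ is a critical point of $f$, the final expression must be independent of the choice of $W$; this both simplifies the computation and provides a built-in consistency check.

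The bulk of the work is the standard two-parameter calculation of Riemannian Morse theory. Using the torsion-free identity $\nabla_u\partial_s=\nabla_s\partial_u$ and the curvature identity $\nabla_u\nabla_s Y-\nabla_s\nabla_u Y=R(\partial_u\gamma,\partial_s\gamma)Y$, one obtains at $u=0$
$$\partial_u(E-V(\gamma_u))=-g(\nabla V,\xi),\qquad \partial_u g(\dot\gamma_u,\dot\gamma_u)=2g(\Dds\xi,\dot\gamma),$$
$$\partial_u^2(E-V(\gamma_u))=-H^V(\gamma)(\xi,\xi)-g(\nabla V,W),$$
$$\partial_u^2 g(\dot\gamma_u,\dot\gamma_u)=2g(\Dds\xi,\Dds\xi)+2g(R(\xi,\dot\gamma)\xi,\dot\gamma)+2g(\Dds W,\dot\gamma).$$
Expanding $\partial_u^2\big[(E-V(\gamma_u))g(\dot\gamma_u,\dot\gamma_u)\big]$ by Leibniz, halving, and integrating over $[0,1]$ produces precisely the three integrals on the right-hand side of \eqref{eq:hessiano}, together with the $W$-remainder
$$\mathcal{R}:=\int_0^1\Big[-\tfrac12\,g(\nabla V,W)\,g(\dot\gamma,\dot\gamma)+(E-V)\,g(\Dds W,\dot\gamma)\Big]\,\mathrm ds.$$

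It remains to check that $\mathcal{R}=0$. I integrate the second summand by parts in $s$: the boundary term at $s=1$ vanishes because $W(1)=0$, and the boundary term at $s=0$ vanishes because by \eqref{eq:3p2} and Remark \ref{E-Vat-the-boundary} one has $(E-V(\gamma(s)))\,\|\dot\gamma(s)\|=\sqrt{2c_\gamma(E-V(\gamma(s)))}\to 0$, while $W$ stays bounded. The interior contribution becomes $-\int_0^1 g\big(\Dds[(E-V)\dot\gamma],W\big)\,\mathrm ds$, and the Euler--Lagrange equation \eqref{eq:pr1} collapses to the identity $\Dds[(E-V)\dot\gamma]=-\tfrac12\,g(\dot\gamma,\dot\gamma)\,\nabla V$. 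Substituting yields $\tfrac12\int_0^1 g(\dot\gamma,\dot\gamma)\,g(\nabla V,W)\,\mathrm ds$, which exactly cancels the first summand of $\mathcal{R}$, and \eqref{eq:hessiano} follows.

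The main obstacle is the singular behaviour of $\dot\gamma$ at $s=0$, where $\|\dot\gamma\|$ blows up like $s^{-1/3}$; every integration by parts and every passage to the limit must be carefully justified using the sharp asymptotics of Remark \ref{E-Vat-the-boundary} and Corollary \ref{normal-behavior}. Once those boundary checks are in place, the argument reduces to the classical Riemannian second-variation formula, with the additional $-\tfrac12\,g(\dot\gamma,\dot\gamma)H^V(\gamma)(\xi,\xi)$ term arising from the conformal factor $E-V$.
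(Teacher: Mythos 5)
Your proposal is correct and follows essentially the same route as the paper's proof: a two-parameter second-variation computation with the curvature commutation identity, followed by showing the second-order remainder vanishes via the rewritten geodesic equation $\tfrac{\mathrm D}{\mathrm ds}\big[(E-V(\gamma))\dot\gamma\big]=-\tfrac12 g(\dot\gamma,\dot\gamma)\nabla V(\gamma)$, the vanishing of $(E-V(\gamma))\dot\gamma$ at $s=0$, and $W(1)=0$ (the paper phrases the same cancellation as recognizing a total derivative $\tfrac{\mathrm d}{\mathrm ds}\,g\big(Y,(E-V(\gamma))\dot\gamma\big)$ for its explicit projection-based variation, with $Y$ playing the role of your $W$). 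The only difference is cosmetic: you use a general admissible variation and argue independence of $W$, while the paper fixes a concrete variation through the projection onto $V^{-1}(E)$.
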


\begin{rem}
Note that in our case, differently from the classical one, the initial the contribution of the geometry of the initial manifold disappears,
as if it had collapsed to a single point.
\end{rem}

\begin{proof}[Proof of Proposition~\ref{valutazione-hessiano}.]
Consider a variation  $z_r(s)$ of $\gamma$ in $X_Q$, where $r\in\left]-\epsilon,\epsilon\right[$ and $s\in[0,1]$, in such a way that $z_0=\gamma$.
We denote by $z'_r(s)$ the derivative with respect to $s$ and with $\frac{\mathrm d}{\mathrm d r} z_r$ the derivative with respect to the variational parameter $r$. Let $\xi\in T_\gamma X$ such that $\frac{\mathrm d}{\mathrm dr}\big\vert_{r=0}z_r=\xi$. Moreover,
set $h(r)=f(z_r)$, that is
\[
h(r)=\frac12\int_0^1(E-V(z_r(s))g(z'_r(s),z'_r(s))\,\mathrm ds,
\]
and since $\gamma$ is a critical point of $f$ we have
\[
f''(\gamma)[\xi,\xi]=h''(0).
\]
Differentiating $h(r)$ gives
\begin{multline*}
h'(r)=
\int_0^1\Big[-\tfrac12g(\nabla V(z_r(s),\frac{\mathrm d}{\mathrm dr}z_r(s))g(z'_r(s),z'_r(s))\\\qquad+(E-V(z_r(s))g\left(\frac{\mathrm D}{\mathrm dr} z'_r(s),z'_r(s)\right)\Big]\,\mathrm ds=\\
\int_0^1\Big[-\tfrac12g(\nabla V(z_r(s),\frac{\mathrm d}{\mathrm dr}z_r(s))g(z'_r(s),z'_r(s))\\\qquad+(E-V(z_r(s))g\left(\frac{\mathrm D}{\mathrm ds} \frac{\mathrm d}{\mathrm dr} z_r(s),z'_r(s)\right)\Big]\,\mathrm ds,
\end{multline*}
where we have used $\frac{\mathrm D}{\mathrm dr}$ and $\frac{\mathrm D}{\mathrm ds}$ to denote the covariant derivative induced by the Levi--Civita connection of $g$, made using the vector fields $\frac{\mathrm d}{\mathrm dr}z_r$ and $z'_r$ respectively. Differentiating once again we have
\begin{multline*}
h''(r)=\\\int_0^1\left[-\tfrac12 g(z'_r,z'_r)H^V(z_r)[\tfrac{\mathrm d}{\mathrm dr}z_r,\tfrac{\mathrm d}{\mathrm dr}z_r]-\tfrac12 g(z'_r,z'_r)g(\nabla V(z_r),\tfrac{\mathrm D}{\mathrm dr}\tfrac{\mathrm d}{\mathrm dr}z_r)\right]\,\mathrm ds \\
+\int_0^1 -2g(\nabla V(z_r),\tfrac{\mathrm d}{\mathrm dr}z_r)g(\tfrac{\mathrm D}{\mathrm ds}\tfrac{\mathrm d}{\mathrm dr}z_r,z'_r)\,\mathrm ds\\
+\int_0^1(E-V(z_r))\big[g(\tfrac{\mathrm D}{\mathrm dr} \tfrac{\mathrm D}{\mathrm ds}\tfrac{\mathrm d}{\mathrm dr} z_r,z'_r) + g(\tfrac{\mathrm D}{\mathrm ds}\tfrac{\mathrm d}{\mathrm dr}z_r,\tfrac{\mathrm D}{\mathrm dr} z'_r)\big]\,\mathrm ds,
\end{multline*}
where the argument $s$ in the above functions is understood.
Now
\[
\tfrac{\mathrm D}{\mathrm dr} \tfrac{\mathrm D}{\mathrm ds}\tfrac{\mathrm d}{\mathrm dr} z_r= \tfrac{\mathrm D}{\mathrm ds} \tfrac{\mathrm D}{\mathrm dr}\tfrac{\mathrm d}{\mathrm dr} z_r + R(\tfrac{\mathrm d}{\mathrm dr} z_r,z'_r)\tfrac{\mathrm d}{\mathrm dr} z_r,
\]
where $R$ denotes the Riemann tensor of $g$, chosen with the appropriate sign convention. Therefore
\begin{multline*}
h''(r)=\\\int_0^1[-\tfrac12 g(z'_r,z'_r)H^V(z_r)[\tfrac{\mathrm d}{\mathrm dr}z_r,\tfrac{\mathrm d}{\mathrm dr}z_r]-\tfrac12 g(z'_r,z'_r)g(\nabla V(z_r),\tfrac{\mathrm D}{\mathrm dr}\tfrac{\mathrm d}{\mathrm dr}z_r)]\,\mathrm ds +\\
\int_0^1 [-2g(\nabla V(z_r),\tfrac{\mathrm d}{\mathrm dr}z_r)g(\tfrac{\mathrm D}{\mathrm ds}\tfrac{\mathrm d}{\mathrm dr}z_r,z'_r) + (E-V(z))g(R(\tfrac{\mathrm d}{\mathrm dr} z_r,z'_r)\tfrac{\mathrm d}{\mathrm dr} z_r,z'_r)]\,\mathrm ds +\\
\int_0^1(E-V(z))[g(\tfrac{\mathrm D}{\mathrm ds} \tfrac{\mathrm D}{\mathrm dr}\tfrac{\mathrm d}{\mathrm dr} z_r ,z'_r) + g(\tfrac{\mathrm D}{\mathrm ds}\tfrac{\mathrm d}{\mathrm dr}z_r,\tfrac{\mathrm D}{\mathrm ds}\tfrac{\mathrm d}{\mathrm dr}z_r)]\,\mathrm ds.
\end{multline*}
By the local nature of the problem we can assume that $M$ is  an open subset of
 $\mathds{R}^N$, so we can choose
\[
z_r(s)=\gamma(s) + r\xi(s) - (1-s)[\gamma(0)+r\xi(0) - \Pi(\gamma(0)+r\xi(0))],
\]
where $\Pi$ is the projection on $V^{-1}(E)$ which is well defined for $r$ is sufficiently small (recall that the choice of $z$ is arbitrary since $\gamma$ is a critical point of $f$). Then, setting $\frac{D}{\mathrm d r}\frac{\mathrm d}{\mathrm d r}\big|_{r=0} z_r(s)=Y(s)$ we have
\begin{multline*}
h''(0)= \int_0^1-\frac12g(\dot \gamma, \dot \gamma) H^V(\gamma)[\xi,\xi] -2g(\nabla V(\gamma),\xi)g(\tfrac{\mathrm D}{\mathrm ds}\xi,\dot \gamma)\,\mathrm ds+\\
\int_0^1(E-V(\gamma))\left[g(\tfrac{\mathrm D}{\mathrm ds}\xi,\tfrac{\mathrm D}{\mathrm ds}\xi)+g(R(\xi,\dot \gamma)\xi,\dot \gamma)\right]\,\mathrm ds + \\
\int_0^1[(E-V(\gamma))g(\tfrac{\mathrm D}{\mathrm ds}Y,\dot \gamma) -\frac12 g(\dot\gamma,\dot\gamma)g(\nabla V(\gamma),Y)]\,\mathrm ds
\end{multline*}
Note that the curve $(E-V(\gamma))\dot\gamma$ can be continuously extended to $s=0$ setting $[(E-V(\gamma))\dot\gamma](0)=0$ because $g(\dot \gamma,\dot \gamma)= \frac{c_\gamma}{E-V(\gamma)}$, $c_\gamma > 0$. Moreover standard regularization arguments show that $(E-V(\gamma))\dot\gamma$ is of class $C^1$ and we deduce the Jacobi-geodesic equation
\[
-\frac{\mathrm d}{\mathrm ds}((E-V(\gamma))\dot\gamma)-\frac12g(\dot \gamma,\dot \gamma)\nabla V(\gamma) = 0,\quad \forall s \in [0,1].
\]
Partial integration gives
\[
\int_0^1[(E-V(\gamma))g(\tfrac{\mathrm D}{\mathrm ds}Y,\dot \gamma) -\frac12 g(\dot\gamma,\dot\gamma)g(\nabla V(\gamma),Y)]\,\mathrm ds= g(Y,(E-V(\gamma)\dot\gamma)\Big|_0^1.
\]
By the regularity of $z_r(s)$ we have the continuity of $y$. Finally $z_r(1)=Q$ for any $r$ gives
\[
Y(1)=\tfrac{\mathrm D}{\mathrm dr}\tfrac{\mathrm d}{\mathrm dr}\big\vert_{r=0} z_r(1)=0,
\]
while $[(E-V(\gamma))\dot\gamma](0)=0$.
\end{proof}

Now let $\gamma : [0,a] \rightarrow M$ be a Jacobi geodesic such that $\gamma(0) \in V^{-1}(E)$, parameterized by arc length, namely satisfying
\begin{equation}\label{eq:arclen}
\frac12g(\dot \gamma, \dot \gamma)(E-V(\gamma)) \equiv 1.
\end{equation}

Fix  $s \in ]0,a]$, consider the Hilbert manifold
\[
X_s\!=\!\left\{x \in AC([0,s],M): \smallint_0^s g(\dot x,\dot x)\,\mathrm d\sigma < +\infty, x(0) \in V^{-1}(E), x(s)=\gamma(s)\right\}
\]
and denote by $T_\gamma X_s $ its tangent space at $\gamma_{|[0,s]}$, endowed with the standard Hilbert structure (defined by \eqref{eq:HiSt} when $s=1$). Consider the symmetric bilinear form $I_s : T_\gamma X_s \times T_\gamma X_s \rightarrow \mathds{R}$, obtained by polarization from \eqref{valutazione-hessiano} in the interval $[0,s]$, and defined by
\begin{multline}\label{eq:Is}
I_s(\xi,\eta)= \int_0^s(E-V(\gamma))\Big[g(\tfrac{\mathrm D}{\mathrm ds}\xi,\tfrac{\mathrm D}{\mathrm ds}\eta)+g(R(\dot\gamma,\xi)\dot \gamma,\eta)\Big]\,\mathrm ds\\
-\int_0^s\Big[g(\nabla V(\gamma),\xi)g(\dot \gamma,\tfrac{\mathrm D}{\mathrm ds}\eta)+g(\dot \gamma,\tfrac{\mathrm D}{\mathrm ds}\xi)g(\nabla V(\gamma),\eta)\\
+\frac12g(H^V(\gamma)[\xi],\eta)g(\dot\gamma,\dot\gamma)\Big]\,\mathrm ds,
\end{multline}
where with a slight abuse of notation we are using the same notation $H^V(\gamma)$ for the linear application $L$ such that $g(L[\xi],\eta)=H^V(\gamma)[\xi,\eta]$. Unfortunately, due to the degeneracy of the Jacobi metric on $V^{-1}(E)$, the natural space where to study  $I_s$ is
\begin{multline}\label{eq:natspa}
Y_s =\Big \{\xi \text{ absolutely continuous vector field along }\gamma_{|[0,s]}: \\
\int_0^s(E-V(\gamma))g(\tfrac{\mathrm D}{\mathrm ds}\xi,\tfrac{\mathrm D}{\mathrm ds}\xi)\,\mathrm ds < +\infty, \;
\xi(0)\in T_{\gamma(0)}V^{-1}(E), \; \xi(s) =0\Big\},
\end{multline}
equipped with the Hilbert structure
\begin{equation}\label{eq:normapesata}
\langle \xi,\xi \rangle_{0,s} = \int_0^s(E-V(\gamma))g(\tfrac{\mathrm D}{\mathrm ds}\xi,\tfrac{\mathrm D}{\mathrm ds}\xi)\,\mathrm ds.
\end{equation}

The quadratic form associated to $I_a$, namely $I_a(\xi,\xi)$, $\xi \in Y_a$, will be denoted as the \emph{index form of $\gamma$ in $[0,a]$}.
\begin{defin}\label{eq:index}
We define the \emph{index} of $I_a$ as the maximal dimension of all subspaces of $Y_a$ on which the quadratic form $I_a(\xi,\xi)$ is negative definite. The \emph{nullity} of $I_a$ is defined to be the dimension of the subspace of $Y_a$ consisting of the elements $\xi$ such that
\[
I_a(\xi,\eta)=0 \text{ for all }\eta \in Y_a.
\]
Such a subspace is called the \emph{null space} of $I_a$.
\end{defin}

\bigskip

The null space of $I_a$ is strictly related to the Jacobi fields along $\gamma$ which are defined in the following
\begin{defin}\label{def:JacobiFields}
Let $\gamma$ be a geodesic as above. A vector field $\xi$ along $\gamma$ of class $C^0([0,a]) \cap C^2(]0,a])$ is called a \emph{Jacobi field along} $\gamma_{|[0,a]}$ if
it satisfies:
\begin{multline}\label{eq:eqkernel}
-\frac{\mathrm d}{\mathrm ds}\Big((E-V(\gamma))\tfrac{\mathrm D}{\mathrm ds}\xi\Big) + (E-V(\gamma))R(\dot \gamma,\xi)\dot \gamma+\\
+\frac{\mathrm d}{\mathrm ds}\big(g(\nabla V(\gamma),\xi)\dot \gamma\big)- g(\dot \gamma,\tfrac{\mathrm D}{\mathrm ds}\xi)\nabla V(\gamma)+\\
-\frac12g(\dot\gamma,\dot\gamma)H^{V}(\gamma)[\xi]=0 \text{ for all }s \in\left]0,a\right].
\end{multline}
\end{defin}

\begin{prop}\label{thm:nucleod2f}
$\xi \in Y_a$ is in the null space of $I_a$ if and only if it is a Jacobi field along $\gamma_{|[0,a]}$ satisfying
\begin{multline}\label{eq:alBordo}
\text{the continuous map }(E-V(\gamma))\tfrac{\mathrm D}{\mathrm ds}\xi - g(\nabla V(\gamma),\xi)\dot \gamma,\\
\text{at $0$ is parallel to }\nabla V(\gamma(0)).
\end{multline}
\end{prop}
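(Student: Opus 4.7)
The proof will proceed by integration by parts in the index form to isolate $\eta$, reducing the statement to identifying (i) a pointwise equation on $\left]0,a\right]$ that $\xi$ must satisfy, and (ii) a boundary condition at $s=0$ coming from the one surviving boundary term. The technical subtlety throughout is the degeneracy $E-V(\gamma(0))=0$, which both allows $\tfrac{\mathrm D}{\mathrm ds}\xi$ to blow up as $s\to 0^+$ and demands care in extracting boundary values at $s=0$.

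The key algebraic observation is that the two terms in $I_a(\xi,\eta)$ which contain $\tfrac{\mathrm D}{\mathrm ds}\eta$ combine into $\int_0^a g\bigl(W,\tfrac{\mathrm D}{\mathrm ds}\eta\bigr)\,\mathrm ds$, where I set $W=(E-V(\gamma))\tfrac{\mathrm D}{\mathrm ds}\xi - g(\nabla V(\gamma),\xi)\dot\gamma$. Integrating this contribution by parts and collecting the remaining pieces of $I_a$ yields the clean identity
\begin{equation*}
I_a(\xi,\eta) = g(W(a),\eta(a)) - g(W(0),\eta(0)) + \int_0^a g(J_\xi,\eta)\,\mathrm ds,
\end{equation*}
where $J_\xi$ is exactly the left hand side of the Jacobi field equation \eqref{eq:eqkernel}. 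Since $\eta(a)=0$, only the boundary contribution at $s=0$ survives.

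For the implication ``$\Leftarrow$'', if $\xi$ is a Jacobi field with $W(0)$ parallel to $\nabla V(\gamma(0))$, then $J_\xi\equiv 0$ and $g(W(0),\eta(0))=0$, because $\eta(0)\in T_{\gamma(0)}V^{-1}(E)$ is $g$-orthogonal to $\nabla V(\gamma(0))$ (as $E$ is a regular value of $V$). Hence $I_a(\xi,\eta)=0$ for every $\eta\in Y_a$. For the implication ``$\Rightarrow$'', I first test with $\eta$ compactly supported in $\left]0,a\right[$, where the weighted and unweighted norms are equivalent; standard bootstrapping from the weak form of the equation forces $\xi$ to be of class $C^2$ on $\left]0,a\right]$ and to satisfy $J_\xi=0$ pointwise. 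Then, testing with general $\eta\in Y_a$ whose value $\eta(0)$ ranges over all of $T_{\gamma(0)}V^{-1}(E)$, the displayed identity forces $W(0)\perp T_{\gamma(0)}V^{-1}(E)$, hence $W(0)$ is proportional to $\nabla V(\gamma(0))$.

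The chief technical obstacle is justifying the boundary evaluation at $s=0$ in the integration by parts. Each summand in $W$ is individually singular: by Corollary~\ref{normal-behavior} and Remark~\ref{E-Vat-the-boundary}, $E-V(\gamma)\sim s^{2/3}$ while $\dot\gamma\sim s^{-1/3}$, so a priori $(E-V(\gamma))\tfrac{\mathrm D}{\mathrm ds}\xi$ and $g(\nabla V(\gamma),\xi)\dot\gamma$ need not have limits at $0$. The existence of a continuous extension of the combination $W$ must be extracted from the Jacobi equation, which after rearrangement expresses $\tfrac{\mathrm D}{\mathrm ds}W$ in terms of vector fields that are integrable near $s=0$ thanks to Corollary~\ref{normal-behavior} and the finiteness of $\langle\xi,\xi\rangle_{0,a}$ in \eqref{eq:normapesata}. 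The clean execution is to truncate $\eta$ so as to vanish on $[0,\varepsilon]$, perform classical integration by parts on $[\varepsilon,a]$ with no boundary issue, and then pass to the limit $\varepsilon\to 0^+$ using these decay estimates together with the density of such truncations in $Y_a$.
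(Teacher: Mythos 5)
Your proposal takes essentially the same route as the paper's proof: you isolate the combination $W=(E-V(\gamma))\tfrac{\mathrm D}{\mathrm ds}\xi-g(\nabla V(\gamma),\xi)\dot\gamma$ multiplying $\tfrac{\mathrm D}{\mathrm ds}\eta$ in \eqref{eq:Is}, integrate by parts to obtain the Jacobi equation \eqref{eq:eqkernel} in the interior and the boundary condition \eqref{eq:alBordo} from the surviving term at $s=0$, using $\eta(a)=0$ and $\eta(0)\in T_{\gamma(0)}V^{-1}(E)$, exactly as in the paper. You actually spell out the one delicate point (the boundary evaluation at $s=0$, via truncation of $\eta$ and integrability estimates for $\tfrac{\mathrm D}{\mathrm ds}W$) more explicitly than the paper, which simply states that partial integration yields \eqref{eq:alBordo}.
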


\begin{proof}
Suppose that $\xi \in Y_a$ is in the null space of $I_a$. Then
\[
I_a(\xi,\eta)=0 \text{ for any }\eta \in Y_a.
\]
Standard regularization methods shows that $\xi \in C^2(]0,1],\mathds{R}^N)$ and using \eqref{eq:Is} after integration by parts of the quantity
\[
\int_0^a(E-V(\gamma))g(\tfrac{\mathrm D}{\mathrm ds}\xi,\tfrac{\mathrm D}{\mathrm ds}\eta)-g(\nabla V(\gamma),\xi)g(\dot \gamma,\tfrac{\mathrm D}{\mathrm ds}\eta)\,\mathrm ds
\]
gives \eqref{eq:eqkernel}. Partial integration yields also \eqref{eq:alBordo} and shows that if $\xi \in Y_a$
satisfies \eqref{eq:eqkernel} and \eqref{eq:alBordo}, then $\xi$ is in the null space of $I_a$.
\end{proof}

\begin{rem}
In analogy with the regularity of the Jacobi geodesics, one could expect that also the Jacobi fields in the null space of $I_a$ are of class $H^{1,2}$. But this is not true in general.
Consider for example the potential $V_0(x) = \frac12 \Vert x \Vert_e^2$ in $\R^N$, where $\Vert \cdot \Vert_e$ denotes the Euclidean norm. Let $g_0$ be the Euclidean metric and consider the Jacobi metric $g_* = (E-V_0)g_0$.

Fix $P$ in the unit sphere and let $\psi$ the solution of the differential equation
\[
\dot \psi\sqrt{E-\tfrac12 \psi^2} =-1
\]
in the interval $[0,a]$ such that $\psi(0)=\sqrt{2E}$. Straightforward computations shows that $\gamma(s)=\psi(s) P$ is a $g_*$--geodesic starting from the potential well, and $\xi(s)=\sqrt{E-\frac12\psi^2}P=\sqrt{E-V_0(\gamma)}$ is a Jacobi field along $\gamma$ which is in the null space of $I_a$ in $Y_a$.
Now $\dot \xi$ has the same behavior at $0$ as $s^{-\frac{2}3}$, so $\xi$ it is not in $H^{1,2}$.
\end{rem}

\begin{defin}\label{def:conjugatepoint}
The point $\gamma(s)$ is called \emph{conjugate}\footnote{For the singular case considered here, the term \emph{conjugate} to the initial manifold seems more appropriate than
the classical \emph{focal}, since there is no contribution given by the second fundamental form of the initial manifold.} to $V^{-1}(E)$ if there exists a Jacobi field $\xi \in Y_{s}\setminus\{0\}$ satisfying \eqref{eq:alBordo}. The \emph{multiplicity} of the conjugate point $\gamma(s)$ is defined as the maximum number of such linearly independent vector fields.
\end{defin}
Finally, we can state the Morse Index Theorem.
\begin{teo}\label{MIT}
The index of $I_a$ is finite and equals the number of points $\gamma(s)$, $s \in\left]0,a\right[$, conjugate to $V^{-1}(E)$ each counted with its multiplicity.
\end{teo}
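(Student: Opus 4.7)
The plan is to follow the classical monotonicity-based proof of the Morse Index Theorem, working in the weighted Hilbert space $Y_s$ of \eqref{eq:natspa} and exploiting the key observation of Proposition~\ref{valutazione-hessiano}: the degeneracy of $g_*$ on $V^{-1}(E)$ makes the endpoint $s=0$ behave as a \emph{fixed point} rather than a variable point on a submanifold, since the second fundamental form of $V^{-1}(E)$ does not appear in $I_s$. Accordingly, I would define the index function $i(s)=\mathrm{ind}(I_s)$ and nullity $n(s)=\dim\ker I_s$ for $s\in]0,a]$, and aim to establish: (i) $i(s)=0$ for $s$ sufficiently small; (ii) $i$ is locally constant on the complement of the conjugate set; (iii) at each interior conjugate instant $s_0\in]0,a[$, the jump $i(s_0+\varepsilon)-i(s_0-\varepsilon)$ equals $n(s_0)$ for $\varepsilon>0$ small. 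Summing over $]0,a[$ and using Proposition~\ref{thm:nucleod2f} yields the conclusion; finiteness will follow from isolatedness of conjugate points (consequence of the ODE nature of \eqref{eq:eqkernel} away from $s=0$).

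For (i), I would invoke Remark~\ref{rem:reg-near-boundary}: for small $s>0$ the restriction $\gamma_{|[0,s]}$ is the unique minimizer from $V^{-1}(E)$ to $\gamma(s)$, so $I_s$ is positive semidefinite with trivial kernel, forcing $i(s)=n(s)=0$. Monotonicity $i(s_1)\le i(s_2)$ for $s_1<s_2$ follows from the extension-by-zero trick: any $\xi\in Y_{s_1}$ extends to an element of $Y_{s_2}$ agreeing with $I_{s_1}$ on the form. Local constancy away from conjugate points will be obtained through the usual continuous-reparameterization argument: a negative $k$-plane in $Y_{s_0}$ transfers to $Y_{s_0\pm\varepsilon}$ via the affine rescaling $s\mapsto\tfrac{s_0\pm\varepsilon}{s_0}s$ and a cut-off, because the coefficients $E-V(\gamma)$, $\nabla V(\gamma)$, $\dot\gamma$, $R(\cdot,\dot\gamma)\cdot$ of $I_s$ depend continuously on $s$; when $s_0$ is not conjugate, invertibility of the index form provides the uniform quantitative control needed to rule out index drops.

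The main obstacle is step (iii), the precise jump at a conjugate instant $s_0$. The lower bound $\ge n(s_0)$ is obtained by constructing, for each $J\in\ker I_{s_0}\setminus\{0\}$, a competitor $\widetilde J\in Y_{s_0+\varepsilon}$ equal to $J$ on $[0,s_0]$ and smoothly cut off to zero on $[s_0,s_0+\varepsilon]$; since $J(s_0)=0$ and the boundary term $\bigl[(E-V(\gamma))\tfrac{D}{ds}J\bigr](s_0)\ne 0$ (by Proposition~\ref{thm:nucleod2f}, $J$ cannot vanish with zero derivative), a standard computation shows $I_{s_0+\varepsilon}(\widetilde J,\widetilde J)<0$. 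The sharp upper bound $i(s_0^+)\le i(s_0)+n(s_0)$ is the delicate technical point: I would argue by contradiction, producing, from a sequence $\varepsilon_k\downarrow 0$ along which the inequality fails, a family of negative subspaces whose limit (after rescaling to a fixed interval) yields a null vector field at $s_0$ outside $\ker I_{s_0}$. Making this rigorous requires a Rellich-type compactness of the inclusion of the weighted space $Y_s$ into, say, vector fields continuous on any $[\delta,s]$ with $\delta>0$, together with a careful analysis near $s=0$ using Corollary~\ref{normal-behavior} and Remark~\ref{E-Vat-the-boundary} to control the singular behavior of $\tfrac{D}{ds}\xi$: this is what replaces the usual finite-dimensional broken-geodesic reduction in the classical proof, whose direct adaptation is obstructed here by the degeneracy of the norm \eqref{eq:normapesata}.
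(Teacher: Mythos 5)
Your scheme (index function $i(s)$, vanishing for small $s$, local constancy, jump equal to the nullity) is in outline the same classical strategy the paper follows via do Carmo, but precisely at the point you yourself flag as ``the delicate technical point'' the proposal stops being a proof. For the upper bound $i(s_0+\varepsilon)\le i(s_0)+n(s_0)$ you only indicate a contradiction argument resting on a ``Rellich-type'' compactness of $Y_s$ into continuous fields on $[\delta,s]$; but no mechanism is given for extracting, from negative subspaces of $I_{s_0+\varepsilon_k}$ of too large dimension, a limit field that (a) stays in the weighted space, (b) keeps the boundary condition \eqref{eq:alBordo} at the singular end, (c) is nontrivial, and (d) makes the form pass to the limit: the mixed terms $g(\nabla V(\gamma),\xi)\,g(\dot\gamma,\tfrac{\mathrm D}{\mathrm ds}\xi)$ are not weakly continuous in the norm \eqref{eq:normapesata} unless one has the quantitative estimates near $s=0$ of the kind the paper develops in \eqref{eq:est3}--\eqref{eq:est6bis}. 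Your step (i) has a related soft spot: $I_s$ is defined on $Y_s$, which strictly contains the $H^{1,2}$ fields (the paper exhibits null Jacobi fields that are not $H^{1,2}$), so ``positive semidefinite with trivial kernel'' does not follow from uniqueness of the minimizer via Remark~\ref{rem:reg-near-boundary} alone; one needs the coercivity $I_{s}(\xi,\xi)\ge\delta_0\langle\xi,\xi\rangle_{0,s}$ of Proposition~\ref{prop:s-0unico}, and you will need exactly that quantitative control again in steps (ii)--(iii).

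Moreover, your stated reason for abandoning the finite-dimensional broken-Jacobi-field reduction --- that it is obstructed by the degeneracy of the norm \eqref{eq:normapesata} --- is the opposite of what the paper does. Its technical core (Propositions~\ref{thm:minimizer}, \ref{prop:s-0unico}, \ref{prop:estremi fissi}, \ref{prop:sommadiretta}) shows the reduction does go through: choose the first node $s_1\le\hat s$ so that on the singular subinterval $[0,s_1]$ the weighted functional has a unique minimizing Jacobi field with prescribed value at $s_1$ and $I_{s_1}$ is coercive, handle the remaining subintervals by the standard fixed-endpoint Proposition~\ref{prop:estremi fissi}, and obtain $\mathcal V=\mathcal V^+\oplus\mathcal V^-$, $I_a$-orthogonal, with $I_a$ positive definite on $\mathcal V^+$ and $\mathcal V^-$ finite dimensional. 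Finiteness of the index and the jump formula $i(s+\epsilon)=i(s)+\nu_s$ (Remark~\ref{rem:monotonia}) then become finite-dimensional continuity statements, and the delicate upper semicontinuity you try to prove by hand is absorbed into this reduction. As it stands, then, your proposal both misdiagnoses the role of the degeneracy and leaves its central step (the jump upper bound, hence finiteness and the exact count) unproved; supplying it would essentially force you to reprove the weighted coercivity and convergence estimates of Propositions~\ref{thm:minimizer}--\ref{prop:s-0unico}, or to reinstate the broken-field decomposition of Proposition~\ref{prop:sommadiretta}.
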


\begin{rem}\label{rem:mitH12}
Since $Y_a \cap H^{1,2}$ is dense in $Y_a$, we see that the Morse Index Theorem holds also using the vector subspaces of $Y_a \cap H^{1,2}$ to define the index of $I_a$, provided that we continue to use  Jacobi fields lying in $Y_a$ to define the conjugate points.
\end{rem}

\begin{rem}
Note that by Theorem \ref{MIT} we see that there is only a finite number of points conjugate to $V^{-1}(E)$.
\end{rem}

\medskip

To prove the Morse Index Theorem we need some preliminary results. The subtler
one is Proposition~\ref{thm:minimizer}, which deals with the existence of a minimizer for the quadratic form
\[
F_s(\xi)=I_s(\xi,\xi)
\]
($s \in ]0,a]$) on the space
\begin{multline}\label{eq:YW}
Y_s^W = \Big\{\xi \text{ absolutely continuous vector field along }\gamma\big\vert_{[0,s]}: \\
\int_0^s(E-V(\gamma))g(\tfrac{\mathrm D}{\mathrm ds}\xi,\tfrac{\mathrm D}{\mathrm ds}\xi)\,\mathrm ds < +\infty, \;
\xi(0)\in T_{\gamma(0)}V^{-1}(E), \; \xi(s) =W\Big\}
\end{multline}
where $W \in T_{\gamma(s)}M$.

To prove the Morse index Theorem we shall consider also the quadratic form $I_{s_1,s_2}$ which is just the integral \eqref{eq:Is} in the interval $[s_1,s_2]$ ($0< s_1 < s_2 \leq a$), defined on the vector space
\begin{multline}\label{eq:YW2}
Y_{s_1,s_2}^{W_1,W_2} = \Big\{\xi \text{ absolutely continuous vector field along }\gamma\big\vert_{[s_1,s_2]}: \\
\int_{s_1}^{s_2}g(\tfrac{\mathrm D}{\mathrm ds}\xi,\tfrac{\mathrm D}{\mathrm ds}\xi)\,\mathrm ds < +\infty, \;
\xi(s_1)=W_1, \xi(s_2) =W_2\Big\}
\end{multline}
where $W_i \in T_{\gamma(s_1)}M, i=1,2$.

\begin{rem}\label{rem:stima-uniforme}
If $\int_0^s(E-V(\gamma))g(\tfrac{\mathrm D}{\mathrm ds}\xi,\tfrac{\mathrm D}{\mathrm ds}\xi)\,\mathrm ds < +\infty$, then $\Vert \xi \Vert_\infty < +\infty$ where $\Vert \cdot \Vert_\infty$ is the $L^\infty$-norm. Indeed, denoting by $d_g$ the distance induced by the Riemannian structure $g$ and by $\Vert \cdot \Vert$ the norm induced by $g$ in the tangent space, we have, for any $0 < s_1 < s_2 \leq a$,
\begin{multline*}
d_g(\xi(s_1),\xi(s_2)) \leq \int_{s_1}^{s_2} \frac{\sqrt{E-V(\gamma)}\Vert \tfrac{\mathrm D}{\mathrm ds}\xi \Vert}{\sqrt{E-V(\gamma)}}\,\mathrm ds\leq\\
\Big(\int_{s_1}^{s_2}(E-V(\gamma)) \Vert \tfrac{\mathrm D}{\mathrm ds}\xi \Vert^2 \,\mathrm ds\Big)^{1/2}\cdot\Big( \int_{s_1}^{s_2}\Vert \dot \gamma \Vert^2 \,\mathrm ds\Big)^{1/2},
\end{multline*}
while $\Vert \dot \gamma \Vert$ goes like $s^{-1/3}$ as $s\to 0^+$.
\end{rem}
In order to prove the existence of a minimizer, the following Lemma will be useful:
\begin{lem}\label{lem:xisulbordo}
If $\xi \in Y_s^W$ then
\begin{equation}\label{eq:assertion}
\lim_{s\rightarrow 0^{+}}g\big(\nabla V(\gamma),\xi\big)\,g(\dot\gamma,\xi)=0.
\end{equation}
\end{lem}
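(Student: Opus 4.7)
The plan is to exploit two observations. First, the finite-weighted-energy condition on $\xi$ forces $\xi$ to extend continuously to $r=0$ with $\xi(0)\in T_{\gamma(0)}V^{-1}(E)$, so that $g(\nabla V(\gamma(0)),\xi(0))=0$; this in turn provides a vanishing rate for $g(\nabla V(\gamma(r)),\xi(r))$ as $r\to 0^+$. Second, by Corollary~\ref{normal-behavior} the direction of $\dot\gamma(r)$ is asymptotically antiparallel to $\nabla V(\gamma(r))$, producing a cancellation that is essential to counteract the blow-up $\|\dot\gamma(r)\|\sim r^{-1/3}$ coming from $(E-V(\gamma))\sim \tau^{2/3}$ (cf.\ Remark~\ref{E-Vat-the-boundary}).

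I would start by proving the quantitative continuity estimate $|\xi(r)-\xi(0)|=o(r^{1/6})$ as $r\to 0^+$. By the Cauchy--Schwarz argument of Remark~\ref{rem:stima-uniforme},
\[
|\xi(r)-\xi(0)|\le\Big(\int_0^r(E-V(\gamma))\|\tfrac{\mathrm D}{\mathrm ds}\xi\|^2\,\mathrm d\tau\Big)^{1/2}\Big(\int_0^r\|\dot\gamma\|^2\,\mathrm d\tau\Big)^{1/2};
\]
the first factor is $o(1)$ since the integrand lies in $L^1([0,s])$, while the second is $O(r^{1/6})$ because $\|\dot\gamma\|^2=2/(E-V(\gamma))\sim\tau^{-2/3}$. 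In particular $\xi$ is bounded (Remark~\ref{rem:stima-uniforme}) and $\xi(0)$ is well defined in $T_{\gamma(0)}V^{-1}(E)$.

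Next I would bound $g(\nabla V(\gamma(r)),\xi(r))$ by the split
\[
g(\nabla V(\gamma(r)),\xi(r))=g\bigl(\nabla V(\gamma(r))-\nabla V(\gamma(0)),\xi(r)\bigr)+g\bigl(\nabla V(\gamma(0)),\xi(r)-\xi(0)\bigr).
\]
The first summand is $O(|\gamma(r)-\gamma(0)|)=O(r^{2/3})$, since $\int_0^r\|\dot\gamma(\tau)\|\,\mathrm d\tau=O(r^{2/3})$; the second is $o(r^{1/6})$ by the preceding step. Hence $g(\nabla V(\gamma(r)),\xi(r))=o(r^{1/6})$.

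The main obstacle is that the naive bound $|g(\dot\gamma,\xi)|\le\|\dot\gamma\|\,|\xi|=O(r^{-1/3})$ combined with $g(\nabla V(\gamma),\xi)=o(r^{1/6})$ only gives $o(r^{-1/6})$, which does \emph{not} vanish. To overcome this I would invoke Corollary~\ref{normal-behavior} to write $\dot\gamma=-(\|\dot\gamma\|/\|\nabla V(\gamma)\|)\nabla V(\gamma)+\|\dot\gamma\|\,\Sigma$ with $\|\Sigma(\tau)\|\le C_0\,\tau^{1/3}$, producing
\[
g(\nabla V(\gamma),\xi)\,g(\dot\gamma,\xi)=-\tfrac{\|\dot\gamma\|}{\|\nabla V(\gamma)\|}\,g(\nabla V(\gamma),\xi)^2+\|\dot\gamma\|\,g(\Sigma,\xi)\,g(\nabla V(\gamma),\xi).
\]
In the first term, the squared factor is $o(r^{1/3})$, exactly compensating $\|\dot\gamma\|/\|\nabla V(\gamma)\|=O(r^{-1/3})$ to give $o(1)$; in the second, $\|\dot\gamma\|\,\|\Sigma\|=O(1)$ and $|\xi|$ is bounded, so the product is $o(r^{1/6})$. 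Both summands tend to $0$, proving \eqref{eq:assertion}.
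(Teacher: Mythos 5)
Your argument is correct and follows essentially the same route as the paper: both split $\dot\gamma$ via Corollary~\ref{normal-behavior} into the component antiparallel to $\nabla V(\gamma)$ plus the $\Sigma$-remainder, and both neutralize the $\Vert\dot\gamma\Vert\sim s^{-1/3}$ blow-up through a quantitative decay of $g(\nabla V(\gamma),\xi)$, the $\Sigma$-term being harmless since $\Vert\dot\gamma\Vert\,\Vert\Sigma\Vert=O(1)$. The only (inessential) difference is how that decay is obtained: the paper integrates $\tfrac{\mathrm d}{\mathrm ds}g(\nabla V(\gamma),\xi)$ from $0$ and applies Cauchy--Schwarz with the weight $E-V(\gamma)$, whereas you derive the same $o(s^{1/6})$ rate from the H\"older continuity of $\gamma$ and of $\xi$ together with $g\big(\nabla V(\gamma(0)),\xi(0)\big)=0$.
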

\begin{proof}
First observe that, by Corollary  \ref{normal-behavior}
\begin{multline*}
g(\nabla V(\gamma),\xi)g(\dot \gamma,\xi)=
g(\nabla V(\gamma),\xi)g\left(\xi, -\frac{\nabla V(\gamma)}{\Vert \nabla V(\gamma) \Vert}\right)\Vert \dot \gamma \Vert +\\
g(\nabla V(\gamma),\xi)g(\xi,\Sigma)\Vert \dot \gamma \Vert.
\end{multline*}
Now
\[
\lim_{s\rightarrow 0^{+}} g(\nabla V(\gamma),\xi)g(\xi,\Sigma)\Vert \dot \gamma \Vert = 0,
\]
since $\Vert \Sigma \Vert \leq C_0s^{1/3}$, $\Vert \dot \gamma \Vert \leq C_\gamma s^{-1/3}$ for some $C_\gamma>0$, and
$g(\nabla V(\gamma(0)),\xi(0))=0$. Moreover, for some positive constants $d_1,d_2$,
\begin{multline*}
|g(\nabla V(\gamma),\xi)(s)|\leq \int_0^s |g(H^V(\gamma)[\dot \gamma],\xi)|\,\mathrm d\sigma +  \int_0^s |g(\nabla V(\gamma),\tfrac{\mathrm D}{\mathrm d\sigma}\xi)|\,\mathrm d\sigma \leq\\
d_1s^{2/3}\Vert \xi \Vert_\infty  + d_2\Big(\int_{0}^{s}\sqrt{E-V(\gamma)} \Vert \tfrac{\mathrm D}{\mathrm ds}\xi \Vert^2 \,\mathrm ds\Big)^{1/2}\Big( \int_{0}^{s}\sigma^{-2/3}\,\mathrm d\sigma\Big)^{1/2},
\end{multline*}
so
\[
|g(\nabla V(\gamma),\xi)(s)|^2\Vert \dot \gamma \Vert \leq\\
 2d_1^2s\Vert \xi \Vert_\infty^2  + 2d_2\int_{0}^{s}(E-V(\gamma))g(\tfrac{\mathrm D}{\mathrm d\sigma}\xi,\tfrac{\mathrm D}{\mathrm d\sigma}\xi)\,\mathrm d\sigma,
\]
from which we deduce \eqref{eq:assertion}, because the map $\big(E-V(\gamma)\big)g\big(\frac{\mathrm D}{\mathrm ds}\xi,\frac{\mathrm D}{\mathrm ds}\xi\big)$ is in $L^1$.
\end{proof}

\begin{prop}\label{thm:minimizer}
There exists $\hat s \in\left]0,a\right]$ such that, for all $s_* \in\left]0,\hat s\right]$ and for all $W \in T_{\gamma_{s_*}}M$,
the functional $F_{s_*}$ has a minimizer in $Y_{s_*}^{W}$, which is a Jacobi field along $\gamma\vert_{[0,s_*]}$.
\end{prop}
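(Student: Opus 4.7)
The plan is to apply the direct method of the calculus of variations on the Hilbert space $Y_{s_*}^W$ endowed with the weighted inner product \eqref{eq:normapesata}. Since $Y_{s_*}$ is a closed linear subspace of the affine space $Y_{s_*}^W$, I would pick any reference extension $\xi_W \in Y_{s_*}^W$ (for instance a smooth vector field vanishing on $[0,s_*/2]$ and equal to $W$ at $s_*$) and decompose $\xi = \xi_W + \eta$ with $\eta \in Y_{s_*}$. From
\[
F_{s_*}(\xi_W + \eta) = F_{s_*}(\xi_W) + 2\,I_{s_*}(\xi_W,\eta) + F_{s_*}(\eta),
\]
a minimizer in $Y_{s_*}^W$ exists provided $F_{s_*}$ is bounded below and coercive on $Y_{s_*}$, i.e.\ provided
\[
F_{s_*}(\eta) \ge c\,\|\eta\|_{0,s_*}^2 \quad \text{for all $\eta\in Y_{s_*}$, some $c>0$,}
\]
for $s_*$ small enough; a standard weak compactness argument in the Hilbert space then produces the minimizer.

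The heart of the proof is the coercivity estimate. The basic tool is a weighted Poincar\'e-type inequality: writing $\eta(s) = -\int_s^{s_*}\Dds\eta\,\mathrm d\sigma$ and applying Cauchy--Schwarz against the weight $E-V(\gamma)\sim\sigma^{2/3}$ (cf.~Remark~\ref{E-Vat-the-boundary}) yields
\[
\|\eta\|_\infty^2 \le \Big(\int_0^{s_*}\frac{\mathrm d\sigma}{E-V(\gamma(\sigma))}\Big)\,\|\eta\|_{0,s_*}^2 = O(s_*^{1/3})\,\|\eta\|_{0,s_*}^2.
\]
Together with the arc-length relation \eqref{eq:arclen} (so that $(E-V(\gamma))\,g(\dot\gamma,\dot\gamma)\equiv 2$ and $\|\dot\gamma\|\sim s^{-1/3}$), this immediately bounds the curvature term and the Hessian-of-$V$ term in $F_{s_*}(\eta)$ by $O(s_*^{\alpha})\,\|\eta\|_{0,s_*}^2$ for some $\alpha>0$. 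The delicate piece is the mixed contribution $-2\int_0^{s_*} g(\nabla V(\gamma),\eta)\,g(\dot\gamma,\Dds\eta)\,\mathrm ds$, since $\|\dot\gamma\|$ fails to be square integrable with respect to Lebesgue measure near $s=0$. Here I would exploit the pointwise bound hidden inside the proof of Lemma~\ref{lem:xisulbordo},
\[
|g(\nabla V(\gamma),\eta)(s)|^2 \le C\Big(s^{4/3}\,\|\eta\|_\infty^2 + s^{1/3}\int_0^{s}(E-V(\gamma))\,g(\Dds\eta,\Dds\eta)\,\mathrm d\sigma\Big),
\]
which rests crucially on the boundary orthogonality $g(\nabla V(\gamma(0)),\eta(0))=0$. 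Combining this with the splitting of $\dot\gamma$ from Corollary~\ref{normal-behavior} and a weighted Cauchy--Schwarz estimate, the mixed term is bounded by $o(1)\cdot\|\eta\|_{0,s_*}^2$ as $s_*\to 0^+$, which delivers coercivity on $Y_{s_*}$ for all sufficiently small $s_*$.

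Once coercivity is in hand, the direct method runs in the standard way: any minimizing sequence $\xi_n$ is bounded in the norm \eqref{eq:normapesata}, so up to extraction $\xi_n\rightharpoonup\xi_*$ weakly; Remark~\ref{rem:stima-uniforme} and a Rellich-type argument on each subinterval $[\delta,s_*]$ yield uniform convergence there, and combined with Lemma~\ref{lem:xisulbordo} this is enough to pass to the limit in the lower-order terms of $I_{s_*}$, while the principal weighted quadratic term is weakly lower semicontinuous by convexity. The minimizer $\xi_*$ then satisfies $I_{s_*}(\xi_*,\eta)=0$ for every $\eta\in Y_{s_*}$, and the regularization argument already used in the proof of Proposition~\ref{thm:nucleod2f} shows that $\xi_*\in C^2(]0,s_*])$ and gives the Jacobi field equation \eqref{eq:eqkernel}.

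The main obstacle is precisely this borderline mixed term: only the conjunction of (i) the orthogonality constraint $g(\nabla V(\gamma(0)),\eta(0))=0$ at the degenerate endpoint, (ii) the near-boundary decay of $g(\nabla V(\gamma),\eta)$ furnished by Lemma~\ref{lem:xisulbordo}, and (iii) the smallness factor coming from $s_*\to 0^+$, is able to absorb the singular factor $\|\dot\gamma\|\sim s^{-1/3}$ into the principal quadratic part and deliver the coercivity $F_{s_*}(\eta)\ge c\,\|\eta\|_{0,s_*}^2$ on which everything else hinges.
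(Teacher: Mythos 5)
Your overall scheme (direct method in the weighted Hilbert structure \eqref{eq:normapesata}, weighted Poincar\'e bound $\Vert\eta\Vert_\infty^2\lesssim s_*^{1/3}\Vert\eta\Vert_{0,s_*}^2$, absorption of the curvature and $H^V$ terms) is fine, but the step on which everything hinges is not: the mixed term $-2\int_0^{s_*}g(\nabla V(\gamma),\eta)\,g(\dot\gamma,\Dds\eta)\,\mathrm ds$ is \emph{not} $o(1)\Vert\eta\Vert_{0,s_*}^2$, and the route you propose cannot prove it. Quantitatively, for $\eta$ with a component along $\nabla V(\gamma)$, say $\eta\approx\phi\,\nabla V/\Vert\nabla V\Vert$ with $\phi(0)=0$, Corollary~\ref{normal-behavior} and \eqref{eq:arclen} give for this term, after one integration by parts, a leading contribution $\approx\int_0^{s_*}\bigl(g(\nabla V(\gamma),\eta)/(E-V(\gamma))\bigr)^2\mathrm ds\sim\int_0^{s_*}s^{-4/3}\phi^2\,\mathrm ds$, which by the (scale-invariant) Hardy inequality is comparable to, not infinitesimally small with respect to, the principal part $\int_0^{s_*}s^{2/3}\phi'^2\,\mathrm ds$; shrinking $s_*$ does not improve the ratio. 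Moreover, the absolute-value estimate you sketch breaks down technically: inserting the pointwise bound $|g(\nabla V(\gamma),\eta)(s)|\lesssim s^{2/3}\Vert\eta\Vert_\infty+s^{1/6}\bigl(\int_0^s(E-V(\gamma))\Vert\Dds\eta\Vert^2\bigr)^{1/2}$ into $\int|g(\nabla V,\eta)|\,\Vert\dot\gamma\Vert\,\Vert\Dds\eta\Vert$ and applying weighted Cauchy--Schwarz to the second piece produces a factor $\bigl(\int_0^{s_*}s^{-1/3}/(E-V(\gamma))\,\mathrm ds\bigr)^{1/2}\sim\bigl(\int_0^{s_*}s^{-1}\mathrm ds\bigr)^{1/2}$, which diverges. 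The saving fact is that the dangerous part of the mixed term has a \emph{sign}: this is exactly what the paper exploits. In its proof the mixed term is rewritten by integration by parts (justified at $s=0$ by Lemma~\ref{lem:xisulbordo}), see \eqref{eq:stima-per-parti-2}--\eqref{eq:est3}; its singular leading part is identified, via \eqref{eq:est6}--\eqref{eq:est6ter}, as the positive quantity $\int_0^{s_*}\bigl(g(\nabla V(\gamma),\xi)/(E-V(\gamma))\bigr)^2\mathrm ds$, which is kept on the coercive side of \eqref{eq:stimaIs}, and only the genuinely small remainders (those carrying $\Sigma$, $H^V$, or the boundary value at $s_*$) are estimated. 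Without this cancellation structure your coercivity claim, and hence the whole argument, does not close.

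A secondary, related gap is in the passage to the limit: along a minimizing sequence the mixed term involves $\Dds\xi_n$ paired against $\Vert\dot\gamma\Vert\sim s^{-1/3}$, which is not square integrable against Lebesgue measure near $0$, so weak convergence in $H^1_{\mathrm{loc}}(]0,s_*])$ plus uniform convergence does not by itself give convergence of that term; it is not ``lower order''. The paper again uses the integrated-by-parts representation \eqref{eq:est3}, splitting $[0,\bar s]\cup[\bar s,s_*]$ and using equi-boundedness of $\int_0^{s_*}(E-V(\gamma))\Vert\Dds\xi_n\Vert^2\mathrm ds$ to make the near-boundary contribution uniformly small, before passing to the limit on $[\bar s,s_*]$. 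Your proposal would need the analogous argument; as written, both the coercivity and the lower semicontinuity steps rely on an estimate of the mixed term that is false.
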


\begin{proof}
First let us recall from \eqref{eq:Is} that
\begin{multline}\label{eq:Is2}
I_{s_*}(\xi,\xi)= \int_0^{s_*}(E-V(\gamma))\Big[\Vert\tfrac{\mathrm D}{\mathrm ds}\xi\Vert^2+g(R(\dot\gamma,\xi)\dot \gamma,\xi)\Big]\,\mathrm ds\\
-\int_0^{s_*}2\Big[g(\nabla V(\gamma),\xi)g(\dot \gamma,\tfrac{\mathrm D}{\mathrm ds}\xi)
+\frac12g(H^V(\gamma)[\xi],\xi)\Vert\dot\gamma\Vert^2\Big]\,\mathrm ds,
\end{multline}

Let us now estimate some of the terms in the above expression.
First, let $C_1$ be a constant such that
\begin{equation}\label{eq:est2}
\left|\int_0^{s_*} g(R(\dot\gamma,\xi)\dot\gamma,\xi)-\frac12 g(H^V(\gamma)[\xi],\xi)\Vert\dot\gamma\Vert^2\,\mathrm ds\right|\le\frac{C_1}{2}\int_0^{s_*}\Vert\dot\gamma\Vert^2\,\Vert\xi\Vert^2\,\mathrm ds
\end{equation}
Now we want to estimate the first term in the second line of \eqref{eq:Is2} above. To this end, we observe that, using integration by parts, recalling that $\xi(s_*)=W$ and $\gamma$ satisfies \eqref{eq:pr1}, thanks to Lemma \ref{lem:xisulbordo}  we have for any $\xi \in Y_{s_*}^{W}$
\begin{multline}\label{eq:stima-per-parti-2}
-\int_{0}^{s_*}g(\nabla V(\gamma),\xi)g(\tfrac{\mathrm D}{\mathrm ds}\xi,\dot \gamma)\,\mathrm ds - \int_{0}^{s_*}g(\nabla V(\gamma),\tfrac{\mathrm D}{\mathrm ds}\xi)g(\xi,\dot \gamma)\,\mathrm ds =\\
-g(\nabla V(\gamma(s_*)),W)g(\dot \gamma(s_*),W) +
\int_{0}^{s_*}g(H^V(\gamma)[\dot \gamma],\xi)g(\dot \gamma,\xi)\,\mathrm ds +  \\
\int_{0}^{s_*}\frac{g(\nabla V(\gamma),\xi)}{E-V(\gamma)}\Big(g(\nabla V(\gamma),\dot \gamma)g(\dot \gamma,\xi)-\frac12g(\dot \gamma, \dot \gamma)
g(\nabla V(\gamma),\xi) \Big)\,\mathrm ds.
\end{multline}

On the other side, using Corollary \ref{normal-behavior}, one obtains
\begin{multline}\label{eq:stima-3}
-\int_{0}^{s_*}g(\nabla V(\gamma),\xi)g(\tfrac{\mathrm D}{\mathrm ds}\xi,\dot \gamma)\,\mathrm ds - \int_{0}^{s_*}g(\nabla V(\gamma),\tfrac{\mathrm D}{\mathrm ds}\xi)g(\xi,\dot \gamma)\,\mathrm ds =\\
-2\int_{0}^{s_*}g(\nabla V(\gamma),\xi)g(\tfrac{\mathrm D}{\mathrm ds}\xi,\dot \gamma)\,\mathrm ds+\\
\int_{0}^{s_*}g(\nabla V(\gamma),\xi)g(\tfrac{\mathrm D}{\mathrm ds}\xi,\Sigma)\Vert \dot \gamma \Vert\,\mathrm  ds - \int_{0}^{s_*}g(\nabla V(\gamma),\tfrac{\mathrm D}{\mathrm ds}\xi)
g(\xi,\Sigma)\Vert \dot \gamma \Vert\,\mathrm  ds,
\end{multline}
and then equating the righthand sides of \eqref{eq:stima-per-parti-2} and \eqref{eq:stima-3} we get
\begin{multline}\label{eq:est3}
-2\int_{0}^{s_*}g(\nabla V(\gamma),\xi)g(\tfrac{\mathrm D}{\mathrm ds}\xi,\dot \gamma)\,\mathrm ds=\\
-g(\nabla V(\gamma(s_*)),W)g(\dot \gamma(s_*),W) +
\int_{0}^{s_*}g(H^V(\gamma)[\dot \gamma],\xi)g(\dot \gamma,\xi)\,\mathrm ds   \\
+\int_{0}^{s_*}\frac{g(\nabla V(\gamma),\xi)}{E-V(\gamma)}\Big(g(\nabla V(\gamma),\dot \gamma)g(\dot \gamma,\xi)-\frac12g(\dot \gamma, \dot \gamma)
g(\nabla V(\gamma),\xi) \Big)\,\mathrm ds\\
-\int_{0}^{s_*}\left(g(\nabla V(\gamma),\xi)g(\tfrac{\mathrm D}{\mathrm ds}\xi,\Sigma) -g(\nabla V(\gamma),\tfrac{\mathrm D}{\mathrm ds}\xi)
g(\xi,\Sigma)\right)\Vert \dot \gamma \Vert\,\mathrm  ds.
\end{multline}
We can now estimate the righthand side above as follows. First, let $C_2=C_2(s_*)$ be a constant such that
\begin{equation}\label{eq:est4}
\left|g(\nabla V(\gamma(s_*)),W)g(\dot \gamma(s_*),W)\right|\le C_2.
\end{equation}
Secondly, there exists a constant -- that we can assume equal to $C_1$ as in \eqref{eq:est2} -- such that
\begin{equation}\label{eq:est5}
\left|\int_{0}^{s_*}g(H^V(\gamma)[\dot \gamma],\xi)g(\dot \gamma,\xi)\,\mathrm ds\right|\le
\frac{C_1}{2}\int_0^{s_*}\Vert\dot\gamma\Vert^2\,\Vert\xi\Vert^2\,\mathrm ds.
\end{equation}
Since $E-V(\gamma(s))$ behaves like $s^{2/3}$ as $s\to 0$, by Corollary \ref{normal-behavior},  we immediately obtain the existence of a constant $\tilde C_0$ such that
\begin{equation}\label{stima-Sigma}
\Vert \Sigma \Vert \leq \tilde C_0\sqrt{E-V(\gamma(s))} \text{ for any }s.
\end{equation}
Then there exists a constant $C_£$ such that
\begin{multline}\label{eq:est7}
\left|\int_{0}^{s_*}\left(g(\nabla V(\gamma),\xi)g(\tfrac{\mathrm D}{\mathrm ds}\xi,\Sigma) -g(\nabla V(\gamma),\tfrac{\mathrm D}{\mathrm ds}\xi)
g(\xi,\Sigma)\right)\Vert \dot \gamma \Vert\,\mathrm  ds\right| \\
\leq C_3 \Vert \xi \Vert_{\infty} \Big(\int_{0}^{s_*}(E-V(\gamma)) \Vert \tfrac{\mathrm D}{\mathrm ds}\xi \Vert^2 \,\mathrm ds\Big)^{1/2}
\Big( \int_{0}^{s_*}\Vert \dot \gamma \Vert^2\,\mathrm  ds\Big)^{1/2},
\end{multline}
where $\Vert \cdot \Vert_{\infty}$ denotes the norm in $L^\infty$.

Finally, to estimate the remaining part in \eqref{eq:est3}, observe that by Corollary \ref{normal-behavior} again we get
\begin{multline*}
g(\nabla V(\gamma),\dot \gamma)g(\dot \gamma,\xi)-\tfrac12g(\dot \gamma, \dot \gamma)
g(\nabla V(\gamma),\xi) =\\
\Vert\dot\gamma\Vert^2\Big[\tfrac12g(\nabla V(\gamma),\xi)-\Vert\nabla V(\gamma)\Vert g(\Sigma,\xi)\\
+g(\nabla V(\gamma),\Sigma) g(\xi,\Sigma)-\frac12\Vert\Sigma\Vert^2g(\nabla V(\gamma),\xi)\Big],
\end{multline*}
obtaining
\begin{multline}\label{eq:est6}
\int_{0}^{s_*}\frac{g(\nabla V(\gamma),\xi)}{E-V(\gamma)}\Big(g(\nabla V(\gamma),\dot \gamma)g(\dot \gamma,\xi)-\frac12g(\dot \gamma, \dot \gamma)
g(\nabla V(\gamma),\xi) \Big)\,\mathrm ds=\\
\int_0^{s_*}\frac{g(\nabla V(\gamma),\xi)^2}{2(E-V(\gamma))}\Vert\dot\gamma\Vert^2\,\mathrm ds
+\int_0^{s_*}
\frac{g(\nabla V(\gamma),\xi)}{(E-V(\gamma))}\cdot \\
\cdot \left(-\Vert\nabla V(\gamma)\Vert g(\Sigma,\xi)+g(\nabla V(\gamma),\Sigma) g(\xi,\Sigma)-\frac12\Vert\Sigma\Vert^2 g(\nabla V(\gamma),\xi)\right)\Vert\dot\gamma\Vert^2\,\mathrm ds,
\end{multline}
and now observe that the first integral in the righthand side above,
since by \eqref{eq:arclen} $\Vert\dot\gamma\Vert^2=2(E-V(\gamma))^{-1}$, can be written as
\begin{equation}\label{eq:est6ter}
\int_0^{s_*}\frac{g(\nabla V(\gamma),\xi)^2}{2(E-V(\gamma))}\Vert\dot\gamma\Vert^2\,\mathrm ds=
\int_0^{s_*}\left(\frac{g(\nabla V(\gamma),\xi)}{E-V(\gamma)}\right)^2\,\mathrm ds
\end{equation}
while the second integral in the righthand side of \eqref{eq:est6}, (using \eqref{stima-Sigma} to estimate the infinitesimal quantity $\Sigma$ and recalling  $\Vert\dot\gamma\Vert^2=2(E-V(\gamma))^{-1}$ again), can be estimated in norm by the quantity
\begin{equation}\label{eq:est6bis}
C_4\int_0^{s_*}\frac{|g(\nabla V(\gamma),\xi)|}{(E-V(\gamma))}\Vert\xi\Vert\,\Vert \dot \gamma\Vert\,\mathrm ds
\end{equation}
for some suitable constant $C_4$.

Therefore, joining together in \eqref{eq:Is2} information from \eqref{eq:est2} and \eqref{eq:est3}--\eqref{eq:est6bis}, we can control  $I_{s_*}(\xi,\xi)$ from below as follows:
\begin{multline}\label{eq:estIs2}
I_{s_*}(\xi,\xi)\ge \int_0^{s_*}\Big[(E-V(\gamma))\left\Vert\tfrac{\mathrm D}{\mathrm ds}\xi\right\Vert^2+\left(\frac{g(\nabla V(\gamma),\xi)}{E-V(\gamma)}\right)^2\Big]\,\mathrm ds\\
-C_2-C_1\int_0^{s_*}\Vert\dot\gamma\Vert^2\,\Vert\xi\Vert^2\,\mathrm ds
-C_4\int_0^{s_*}\frac{|g(\nabla V(\gamma),\xi)|}{(E-V(\gamma))}\Vert\xi\Vert\,\Vert\dot \gamma \Vert\,\mathrm ds\\
-C_3\Vert \xi \Vert_{\infty} \Big(\int_{0}^{s_*}(E-V(\gamma)) \Vert \tfrac{\mathrm D}{\mathrm ds}\xi \Vert^2 \,\mathrm ds\Big)^{1/2}
\Big( \int_{0}^{s_*}\Vert \dot \gamma \Vert^2\,\mathrm  ds\Big)^{1/2}.
\end{multline}
Now our aim is to find two positive constant values $\delta$ and  $A$ depending on $s_*$ such that
\begin{equation}\label{eq:stimaIs}
I_{s_*}(\xi,\xi)\ge\delta\int_0^{s_*}\Big[(E-V(\gamma))\left\Vert\tfrac{\mathrm D}{\mathrm ds}\xi\right\Vert^2+\left(\frac{g(\nabla V(\gamma),\xi)}{E-V(\gamma)}\right)^2\Big]\,\mathrm ds-A(s_*).
\end{equation}
To prove this fact, the terms on the second and third row in \eqref{eq:estIs2} must be conveniently estimated.
As an example, we show the argument for the term which is multiplied by $C_1$. First observe that, thanks to Remark \ref{rem:stima-uniforme}, we can write
\begin{equation}\label{eq:xinfty}
\Vert\xi\Vert_\infty\le\Vert W\Vert
+\Big(\int_{0}^{s_*}(E-V(\gamma)) \left\Vert \tfrac{\mathrm D}{\mathrm ds}\xi\right\Vert^2 \,\mathrm ds\Big)^{\frac12}\Big( \int_{0}^{s_*}\Vert \dot \gamma \Vert^2 \,\mathrm ds\Big)^{\frac12},
\end{equation}
and then
\begin{multline*}
\int_0^{s_*}\Vert\dot\gamma\Vert^2\,\Vert\xi\Vert^2\,\mathrm ds \\
\le\left[\Vert W\Vert
+\Big(\int_{0}^{s_*}(E-V(\gamma)) \left\Vert \tfrac{\mathrm D}{\mathrm ds}\xi\right\Vert^2 \,\mathrm ds\Big)^{\frac12}\Big( \int_{0}^{s_*}\Vert \dot \gamma \Vert^2 \,\mathrm ds\Big)^{\frac12}\right]^2\int_{0}^{s_*}\Vert \dot \gamma \Vert^2 \,\mathrm ds\\
\le 2\Vert W\Vert^2\int_{0}^{s_*}\Vert \dot \gamma \Vert^2 \,\mathrm ds+2\int_{0}^{s_*}(E-V(\gamma)) \left\Vert \tfrac{\mathrm D}{\mathrm ds}\xi\right\Vert^2 \,\mathrm ds\int_{0}^{s_*}\Vert \dot \gamma \Vert^2 \,\mathrm ds.
\end{multline*}
Since $\Vert \dot \gamma \Vert^2$ is in $L^1([0,s_*]$,  we can choose $s_*$ in such a way that
$$
-C_1\int_0^{s_*}\Vert\dot\gamma\Vert^2\,\Vert\xi\Vert^2\,\mathrm ds \ge -A_1-\delta_1 \int_{0}^{s_*}(E-V(\gamma)) \left\Vert \tfrac{\mathrm D}{\mathrm ds}\xi\right\Vert^2 \,\mathrm ds
$$
with $\delta_1>0$ that can made arbitrarily small choosing $s_*$ small enough. Likewise, all the other terms in the second and third row of \eqref{eq:estIs2} can be estimated in order to obtain \eqref{eq:stimaIs}.

Now let $\xi_n$ be a minimizing sequence. First note that by \eqref{eq:stimaIs}, the quantity
\[
\int_0^{s_*}\Big[(E-V(\gamma))\left\Vert\tfrac{\mathrm D}{\mathrm ds}\xi\right\Vert^2+\left(\frac{g(\nabla V(\gamma),\xi)}{E-V(\gamma)}\right)^2\Big]
\]
is bounded. Then, up to taking subsequences, we can assume the weak convergence $\xi_n \rightharpoonup \xi$ in $H^1_{loc}(]0,s_*])$, and the uniform convergence of $\xi_n$ to $\xi$ in $[0,s_*]$ (thanks to Remark \ref{rem:stima-uniforme}), from which we deduce that $\xi \in Y_{s*}^W$ and
\begin{multline*}
\int_0^{s_*}\Big[(E-V(\gamma))\left\Vert\tfrac{\mathrm D}{\mathrm ds}\xi\right\Vert^2+\left(\frac{g(\nabla V(\gamma),\xi)}{E-V(\gamma)}\right)^2\Big]\\
\leq \liminf_{n\to +\infty}
\int_0^{s_*}\Big[(E-V(\gamma))\left\Vert\tfrac{\mathrm D}{\mathrm ds}\xi_n\right\Vert^2+\left(\frac{g(\nabla V(\gamma),\xi_n)}{E-V(\gamma)}\right)^2\Big].
\end{multline*}
Moreover, since $\xi_n$ is uniformly convergent to $\xi$ and $\Vert \dot \gamma \Vert^2$ is in $L^1([0,s_*]$ we have
\[
\int_0^{s_*}(E-V(\gamma))g(R(\dot\gamma,\xi_n)\dot \gamma,\xi_n)\mathrm ds \rightarrow \int_0^{s_*}(E-V(\gamma))g(R(\dot\gamma,\xi)\dot \gamma,\xi)\mathrm ds
\]
and
\[
\int_{0}^{s_*}g(H^V(\gamma)[\dot \gamma],\xi_n)g(\dot \gamma,\xi_n)\,\mathrm ds\rightarrow \int_{0}^{s_*}g(H^V(\gamma)[\dot \gamma],\xi)g(\dot \gamma,\xi)\,\mathrm ds.
\]
Now, estimate \eqref{eq:est7} can be obtained for any $\bar s \in ]0,s_*]$, and we can choose $\bar s$ so that
\[
\int_{0}^{\bar s}\left(g(\nabla V(\gamma),\xi_n)g(\tfrac{\mathrm D}{\mathrm ds}\xi_n,\Sigma) -g(\nabla V(\gamma),\tfrac{\mathrm D}{\mathrm ds}\xi_n)
g(\xi_n,\Sigma)\right)\Vert \dot \gamma \Vert\,\mathrm  ds
\]
is arbitrarily small,
because $\int_{0}^{s_*}(E-V(\gamma)) \Vert \tfrac{\mathrm D}{\mathrm ds}\xi_n \Vert^2 \,\mathrm ds$ is equi-bounded and $\Vert \dot \gamma \Vert^2$ is in $L^1$.
Moreover the week convergence of $\xi_n$ in $H^1([\bar s,s_*])$ gives
\begin{multline*}
\int_{\bar s}^{s_*}\left(g(\nabla V(\gamma),\xi_n)g(\tfrac{\mathrm D}{\mathrm ds}\xi_n,\Sigma) -g(\nabla V(\gamma),\tfrac{\mathrm D}{\mathrm ds}\xi_n)
g(\xi_n,\Sigma)\right)\Vert \dot \gamma \Vert\,\mathrm  ds \rightarrow \\
\int_{\bar s}^{s_*}\left(g(\nabla V(\gamma),\xi)g(\tfrac{\mathrm D}{\mathrm ds}\xi,\Sigma) -g(\nabla V(\gamma),\tfrac{\mathrm D}{\mathrm ds}\xi)
g(\xi,\Sigma)\right)\Vert \dot \gamma \Vert\,\mathrm  ds.
\end{multline*}
Analogously we can choose $\bar s$ so that
\begin{multline*}
\int_{0}^{\bar s}\frac{g(\nabla V(\gamma),\xi_n)}{(E-V(\gamma))}\Big(-\Vert\nabla V(\gamma)\Vert g(\Sigma,\xi_n)+g(\nabla V(\gamma),\Sigma) g(\xi_n,\Sigma)+ \\
-\frac12\Vert\Sigma\Vert^2 g(\nabla V(\gamma),\xi_n)\Big)\Vert\dot\gamma\Vert^2\,\mathrm ds
\end{multline*}
is arbitrarily small, and use the uniform convergency of $\xi_n$ to obtain that
\begin{multline*}
 \int_{\bar s}^{s_*}\frac{g(\nabla V(\gamma),\xi_n)}{(E-V(\gamma))}\Big(-\Vert\nabla V(\gamma)\Vert g(\Sigma,\xi_n)+g(\nabla V(\gamma),\Sigma) g(\xi_n,\Sigma)+ \\
-\frac12\Vert\Sigma\Vert^2 g(\nabla V(\gamma),\xi_n)\Big)\Vert\dot\gamma\Vert^2\,\mathrm ds \rightarrow \\
\int_{\bar s}^{s_*}\frac{g(\nabla V(\gamma),\xi)}{(E-V(\gamma))}\Big(-\Vert\nabla V(\gamma)\Vert g(\Sigma,\xi)+g(\nabla V(\gamma),\Sigma) g(\xi,\Sigma)+ \\
-\frac12\Vert\Sigma\Vert^2 g(\nabla V(\gamma),\xi)\Big)\Vert\dot\gamma\Vert^2\,\mathrm ds.
\end{multline*}
In a such a way, by  \eqref{eq:est3},
\[
\int_{0}^{s_*}g(\nabla V(\gamma),\xi_n)g(\tfrac{\mathrm D}{\mathrm ds}\xi_n,\dot \gamma)\,\mathrm ds \rightarrow
\int_{0}^{s_*}g(\nabla V(\gamma),\xi)g(\tfrac{\mathrm D}{\mathrm ds}\xi,\dot \gamma)\,\mathrm ds
\]
and we are done.
\end{proof}

\begin{prop}\label{prop:s-0unico}
Let  $\hat s$ be as in Proposition \ref{thm:minimizer} and $s_* \in ]0,\hat s]$. Then for any $W \in T_{\gamma(s_*)}M$, $I_{s_*}$ is strictly positive definite and there exists a unique Jacobi field in $Y_{s_*}^W$.
\end{prop}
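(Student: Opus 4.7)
My plan is to derive both assertions from the coercivity estimate \eqref{eq:stimaIs} established in the proof of Proposition~\ref{thm:minimizer}. I would proceed in three moves.

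\emph{Strict positive definiteness.} I specialize \eqref{eq:stimaIs} to the case $W=0$. Inspecting the derivation, every summand contributing to the additive constant $A(s_*)$ is either the bound $C_2$ of \eqref{eq:est4} on $|g(\nabla V(\gamma(s_*)),W)\,g(\dot\gamma(s_*),W)|$, or arises via \eqref{eq:xinfty} from a term involving $\Vert W\Vert$; all of them vanish for $W=0$, hence one may take $A(s_*)=0$, so that for every $\xi\in Y_{s_*}^{0}$
\[
I_{s_*}(\xi,\xi)\;\ge\;\delta\int_0^{s_*}\!\Bigl[(E-V(\gamma))\,g\bigl(\Dds\xi,\Dds\xi\bigr)+\Bigl(\tfrac{g(\nabla V(\gamma),\xi)}{E-V(\gamma)}\Bigr)^{\!2}\Bigr]\,\mathrm ds.
\]
If the right hand side vanishes then $\Dds\xi\equiv 0$ on $\left]0,s_*\right]$, so $\xi$ is $g$--parallel along $\gamma$; since $\xi(s_*)=0$ parallel transport forces $\xi\equiv 0$ on $\left]0,s_*\right]$, and continuity at $0$ concludes.

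\emph{Existence and uniqueness of the Jacobi field in $Y_{s_*}^{W}$.} Existence is immediate from Proposition~\ref{thm:minimizer}: the minimizer $\xi_W$ of $F_{s_*}$ on $Y_{s_*}^{W}$ is a Jacobi field along $\gamma\vert_{[0,s_*]}$. For uniqueness, strict positive definiteness of $I_{s_*}$ on the tangent space $Y_{s_*}^{0}$ makes $F_{s_*}$ strictly convex on the affine space $Y_{s_*}^{W}$, so $\xi_W$ is its unique critical point. It remains to check that every Jacobi field $\xi\in Y_{s_*}^{W}$ is a critical point, i.e.\ that $I_{s_*}(\xi,\eta)=0$ for all $\eta\in Y_{s_*}^{0}$. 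I would argue by integration by parts as in the proof of Proposition~\ref{thm:nucleod2f}: the interior term vanishes by \eqref{eq:eqkernel}, the boundary contribution at $s_*$ vanishes because $\eta(s_*)=0$, and the one at $s=0$ equals
\[
-\lim_{s\to 0^+} g\Bigl((E-V(\gamma))\Dds\xi - g(\nabla V(\gamma),\xi)\dot\gamma,\;\eta(s)\Bigr),
\]
which is zero because $\eta(0)\in T_{\gamma(0)}V^{-1}(E)$ is orthogonal to $\nabla V(\gamma(0))$, while the bracketed vector extends continuously to $0$ with value parallel to $\nabla V(\gamma(0))$---i.e.\ condition \eqref{eq:alBordo} holds. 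Setting $\eta:=\xi-\xi_W\in Y_{s_*}^{0}$, we then have $I_{s_*}(\eta,\eta)=I_{s_*}(\xi,\eta)-I_{s_*}(\xi_W,\eta)=0$, and strict positivity forces $\xi=\xi_W$.

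\emph{Main obstacle.} The delicate step is the justification of \eqref{eq:alBordo} for an arbitrary Jacobi field $\xi\in Y_{s_*}^{W}$, i.e.\ the proof that $(E-V(\gamma))\Dds\xi-g(\nabla V(\gamma),\xi)\dot\gamma$ admits a continuous extension at $s=0$ whose value is parallel to $\nabla V(\gamma(0))$. This is a boundary regularity statement that I would extract from \eqref{eq:eqkernel} in antiderivative form from $s$ to $s_*$, controlling the right hand side through the asymptotics $E-V(\gamma(s))\sim s^{2/3}$, the integrability $\int_0^{s_*}(E-V(\gamma))\,g(\Dds\xi,\Dds\xi)\,\mathrm ds<+\infty$ and the asymptotic orthogonality $\dot\gamma/\Vert\dot\gamma\Vert=-\nabla V(\gamma)/\Vert\nabla V(\gamma)\Vert+\Sigma$, $\Vert\Sigma\Vert\le C_0 s^{1/3}$, of Corollary~\ref{normal-behavior}; the last property is what kills the potentially singular component of the candidate limit tangent to $V^{-1}(E)$.
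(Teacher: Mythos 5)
Your first two steps are fine and coincide with the paper's own argument: specializing \eqref{eq:stimaIs} to $W=0$ does give $I_{s_*}(\xi,\xi)\ge\delta_0\langle\xi,\xi\rangle_{0,s_*}$ on $Y^0_{s_*}$, hence strict positive definiteness, and existence of a Jacobi field in $Y^W_{s_*}$ is exactly Proposition~\ref{thm:minimizer}. The problem is the uniqueness step. You reduce it to the claim you yourself call the main obstacle, namely that every Jacobi field lying in $Y^W_{s_*}$ automatically satisfies the boundary condition \eqref{eq:alBordo}; you do not prove this, and in fact it is false. Take the model of the paper's remark: $V_0(x)=\frac12\Vert x\Vert_e^2$ with the Euclidean metric, $\gamma(s)=\psi(s)P$ the radial $g_*$--geodesic, $u(s)=E-V_0(\gamma(s))\sim A\,s^{2/3}$ for some $A>0$, and fix $e\perp P$. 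For $\xi=\phi\,e$ the Jacobi equation \eqref{eq:eqkernel} reduces to $(u\phi')'+\tfrac{c_\gamma}{u}\phi=0$, which in the Maupertuis time $t(s)=\int_0^s\sqrt{c_\gamma}\,u(\tau)^{-1}\,\mathrm d\tau$ becomes the regular equation $\ddot\phi+\phi=0$. The solution $\phi(s)=\sin t(s)$ behaves like $s^{1/3}$, is continuous with $\xi(0)=0\in T_{\gamma(0)}V_0^{-1}(E)$, and has finite weighted energy since $u(\phi')^2=c_\gamma u^{-1}\cos^2 t(s)\sim s^{-2/3}$; yet $(E-V_0(\gamma))\Dds\xi-g(\nabla V_0(\gamma),\xi)\dot\gamma=u\phi'\,e\to\sqrt{c_\gamma}\,e$, a nonzero vector orthogonal to $\nabla V_0(\gamma(0))$, so \eqref{eq:alBordo} fails. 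The asymptotic orthogonality of Corollary~\ref{normal-behavior} does not kill this tangential limit, contrary to the heuristic at the end of your sketch: it only controls the term $g(\nabla V,\xi)\dot\gamma$, which here vanishes identically.

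The same example shows that uniqueness cannot hold in the literal class you work with: $\zeta(s)=\sin\big(t(s)-t(s_*)\big)\,e$ is a nonzero Jacobi field lying in $Y^0_{s_*}$ (its value at $0$ is proportional to $e$, hence tangent to $V_0^{-1}(E)$), so adding it to the minimizer produces a second Jacobi field in $Y^W_{s_*}$. What the paper actually proves -- and what is used later, e.g.\ in Proposition~\ref{prop:sommadiretta} -- is uniqueness within the class of Jacobi fields satisfying \eqref{eq:alBordo}: if $\xi_1,\xi_2$ are two such fields with $\xi_i(s_*)=W$, their difference is in the null space of $I_{s_*}$ by Proposition~\ref{thm:nucleod2f}, and the coercivity estimate forces it to vanish. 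So the repair is not to prove that \eqref{eq:alBordo} is automatic (it is not), but to restrict the uniqueness assertion to fields satisfying \eqref{eq:alBordo} -- a condition the minimizer does satisfy, being a critical point of $F_{s_*}$ on $Y^W_{s_*}$ -- at which point your convexity argument collapses into the paper's null-space argument and the rest of your proposal goes through.
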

\begin{proof}
Denote by $J_{s_*}$ the linear space consisting of the Jacobi fields $\xi$ such that $\langle \xi,\xi \rangle_{0,s_*} < +\infty$ (cf \eqref{eq:normapesata}) and $\xi(0) \in T_{\gamma(0)} V^{-1}(E)$. Consider the linear map $L:J_{s_*}\rightarrow T_{\gamma(s_*)}M$ such that $L(\xi) = \xi(s_*)$.
By Proposition \ref{thm:minimizer} $L$ is surjective. To prove injectivity observe that, if $W=0$, by the same proof of Proposition \ref{thm:minimizer} we obtain the existence of $\delta_0 > 0$ such that (if $\hat s$ is sufficiently small)
\[
I_{s_*}(\xi,\xi) \geq \delta_0 \langle \xi,\xi \rangle_{0,s_*}, \text{ for any }\xi \in Y_{s_*}^0,
\]
proving that there is a unique vector field in the null space of $I_{s_*}$, namely the null Jacobi field.
\end{proof}

Since the Jacobi metric along a geodesic moving from the boundary of potential well is degenerate only at the starting point, using standard estimates for the Hessian of the action integral written in terms of the Jacobi metric, we obtain also the following
\begin{prop}\label{prop:estremi fissi}
For any $\delta > 0$ there exists $\epsilon > 0$ such that, if $\delta \leq s_1 < s_2 \leq a$,  $s_2 - s_1 \leq \epsilon$ and $W_1 \in  T_{\gamma(s_1)}M, W_2 \in T_{\gamma(s_2)}M$, the quadratic form $I_{s_1,s_2}$ is strictly positive definite and there exists a unique minimizer. It is a Jacobi field $\xi$ (of class $H^{1,2}$ along $\gamma_{|[s_1,s_2]}$) such that $\xi(s_1)=W_1, \xi(s_2)=W_2$.
\end{prop}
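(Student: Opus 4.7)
The plan is to reduce the problem to the classical, non-degenerate fixed-endpoints Morse index setting, exploiting the fact that on $[s_1,s_2]\subset[\delta,a]$ the conformal factor $E-V(\gamma(s))$ is bounded between two positive constants $m_\delta,M_\delta$ depending only on $\delta$. Consequently the weighted space $Y^{W_1,W_2}_{s_1,s_2}$ coincides, as a set and up to equivalent norms, with the usual Sobolev space of $H^{1,2}$ vector fields along $\gamma|_{[s_1,s_2]}$ with prescribed endpoint values, and the same holds for all the coefficient terms appearing in $I_{s_1,s_2}$, i.e.\ $\|\dot\gamma\|$, $\nabla V(\gamma)$, $R(\dot\gamma,\cdot)\dot\gamma$ and $H^V(\gamma)$, which are uniformly bounded on $[\delta,a]$.

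First I would split any $\xi\in Y^{W_1,W_2}_{s_1,s_2}$ as $\xi=\ell+\eta$, where $\ell$ is the affine interpolation (with respect to a parallel frame along $\gamma$) joining $W_1$ and $W_2$, and $\eta$ satisfies $\eta(s_1)=\eta(s_2)=0$. Then I would expand
\[
I_{s_1,s_2}(\xi,\xi)=I_{s_1,s_2}(\ell,\ell)+2I_{s_1,s_2}(\ell,\eta)+I_{s_1,s_2}(\eta,\eta),
\]
and apply Poincar\'e's inequality on $[s_1,s_2]$ to the vector field $\eta$, namely $\|\eta\|_{L^2}^2\le (s_2-s_1)^2\,\|\tfrac{\mathrm D}{\mathrm ds}\eta\|_{L^2}^2$ and $\|\eta\|_\infty\le (s_2-s_1)^{1/2}\|\tfrac{\mathrm D}{\mathrm ds}\eta\|_{L^2}$. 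Using the uniform bounds on the coefficients, all the ``lower order'' terms of $I_{s_1,s_2}(\eta,\eta)$ (those involving $H^V$, $R$, and the $g(\nabla V,\cdot)g(\dot\gamma,\tfrac{\mathrm D}{\mathrm ds}\cdot)$ pieces) are bounded in absolute value by $C\,(s_2-s_1)\,\|\tfrac{\mathrm D}{\mathrm ds}\eta\|_{L^2}^2$, while the principal term is controlled from below by $m_\delta\,\|\tfrac{\mathrm D}{\mathrm ds}\eta\|_{L^2}^2$. Hence, choosing $\epsilon$ so small that $C\epsilon<m_\delta/2$, we obtain a coercivity estimate
\[
I_{s_1,s_2}(\eta,\eta)\ge \frac{m_\delta}{2}\int_{s_1}^{s_2}g\!\left(\tfrac{\mathrm D}{\mathrm ds}\eta,\tfrac{\mathrm D}{\mathrm ds}\eta\right)\,\mathrm ds
\]
for every $\eta$ with vanishing boundary values.

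Once coercivity on the subspace of variations vanishing at the endpoints is available, existence of a unique minimizer in $Y^{W_1,W_2}_{s_1,s_2}$ follows from the direct method: a minimizing sequence $\xi_n=\ell+\eta_n$ is bounded in $H^{1,2}$, admits a weakly convergent subsequence with uniform limit (Rellich), and $I_{s_1,s_2}$ is weakly lower semicontinuous in the leading term and continuous in the remaining ones. Strict positive definiteness of $I_{s_1,s_2}$ on $Y^{0,0}_{s_1,s_2}$, deduced from the same coercivity estimate, forces uniqueness. Writing the Euler--Lagrange equation for the minimizer yields precisely the Jacobi equation \eqref{eq:eqkernel} on $[s_1,s_2]$, which is a linear second order ODE with smooth coefficients on a compact interval bounded away from the degenerate endpoint $s=0$; thus the minimizer is of class $C^2$ and in particular in $H^{1,2}$.

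The main technical obstacle is ensuring that the constants hidden in the lower order estimates are \emph{uniform} in $(s_1,s_2,W_1,W_2)$; the compactness of $[\delta,a]$ and the smoothness of $\gamma$, $V$, $R$ and $H^V$ away from $s=0$ make this straightforward, but careful bookkeeping is needed for the terms of the form $g(\nabla V(\gamma),\xi)g(\dot\gamma,\tfrac{\mathrm D}{\mathrm ds}\xi)$, which must be handled via integration by parts or Cauchy--Schwarz before Poincar\'e can be invoked.
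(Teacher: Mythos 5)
Your argument is correct and is essentially the paper's: the paper gives no detailed proof, observing only that on $[\delta,a]$ the Jacobi metric is non-degenerate so the claim follows from the standard estimates for the Hessian of the fixed-endpoint action functional, and your coercivity argument (uniform bounds $0<m_\delta\le E-V(\gamma)\le M_\delta$ on $[\delta,a]$, Poincar\'e on short intervals to absorb the lower-order terms, direct method, Euler--Lagrange giving the Jacobi equation and $C^2$ regularity) is precisely that standard argument written out. The uniformity in $(s_1,s_2,W_1,W_2)$ you flag is indeed the only point needing care, and compactness of $[\delta,a]$ handles it as you say.
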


Thanks to Propositions \ref{prop:s-0unico} and  \ref{prop:estremi fissi} we can choose a subdivision
\[
0=s_0 < s_1 < \ldots < s_{k-1} < s_k =a
\]
such that $s_1 \leq \hat s$ and, for any $i=2,\ldots,k$, $s_i-s_{i-1} \leq \epsilon$, where $\delta=s_1$.
Denote by $\mathcal V$ the space of the vector fields along $\gamma_{|[0,a]}$ of class $H^{1,2}$ and such that
\begin{equation}\label{eq:in0s1}
\langle \xi,\xi \rangle_{0,s_1} < +\infty, \; g(\xi(0),\nabla V(\gamma(0)) = 0.
\end{equation}

Moreover denote by $\mathcal V^-$ the finite--dimensional vector subspace of $\mathcal V$ consisting of the vector fields $\xi$ along $\gamma_{[0,a]}$ satisfying \eqref{eq:alBordo} and such that
for any $i=1,\ldots,k$, $\xi_{|[s_{i-1},s_i]}$ is a Jacobi field along $\gamma_{|[s_{i-1},s_i]}$. Moreover denote by $\mathcal V^+$ the vector subspace of $\mathcal V$ consisting of the vector fields $\eta$ along $\gamma$ such that
\[
\langle \eta,\eta \rangle_{0,s_1} < +\infty, \quad \eta(s_1)=\eta(s_2)=\ldots=\eta(s_{k-1})=0.
\]
\begin{prop}\label{prop:sommadiretta}
$\mathcal V$ is direct sum $\mathcal V ={\mathcal V}^+ \oplus {\mathcal V}^-$, and the subspace ${\mathcal V}^+$ and ${\mathcal V}^-$ are orthogonal with respect to $I_a$. In addition, $I_{a}$ restricted to ${\mathcal V}^+$ is positive definite.
\end{prop}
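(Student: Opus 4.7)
The plan is to build the decomposition explicitly from Propositions~\ref{prop:s-0unico} and \ref{prop:estremi fissi}. Given $\xi\in\mathcal V$, I would define $\xi^-\in\mathcal V^-$ piecewise by choosing, on the initial interval $[0,s_1]$, the unique Jacobi field produced by Proposition~\ref{prop:s-0unico} lying in $Y_{s_1}^{\xi(s_1)}$, which is automatically the solution of the Euler--Lagrange system of $F_{s_1}$ at $s=0$ and hence satisfies \eqref{eq:alBordo}; and on each subsequent interval $[s_{i-1},s_i]$, $i\geq 2$, the unique Jacobi field furnished by Proposition~\ref{prop:estremi fissi} with prescribed end values $\xi(s_{i-1})$ and $\xi(s_i)$. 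Setting $\xi^+:=\xi-\xi^-$ then gives an element of $\mathcal V^+$, since it vanishes at all interior nodes $s_1,\ldots,s_{k-1}$ (and at $s_k=a$, interpreting $\mathcal V$ inside $Y_a$).

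Triviality of the intersection $\mathcal V^+\cap\mathcal V^-=\{0\}$ follows from the same uniqueness statements applied with zero end data: any element is piecewise Jacobi and vanishes at every $s_i$, so Proposition~\ref{prop:estremi fissi} yields zero on $[s_{i-1},s_i]$ for $i\geq 2$, and Proposition~\ref{prop:s-0unico} with $W=0$ yields zero on $[0,s_1]$. Hence $\mathcal V=\mathcal V^-\oplus\mathcal V^+$.

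For orthogonality, I would decompose $I_a(\xi^-,\eta)=\sum_{i=1}^k I_{s_{i-1},s_i}(\xi^-,\eta)$ and integrate by parts on each subinterval, exploiting the fact that $\xi^-$ solves the Jacobi equation \eqref{eq:eqkernel} there. The interior integrals then cancel, leaving boundary terms of the form $\bigl[(E-V(\gamma))\,g(\Dds\xi^-,\eta) - g(\nabla V(\gamma),\xi^-)\,g(\dot\gamma,\eta)\bigr]_{s_{i-1}}^{s_i}$. The contributions at the interior nodes $s_1,\ldots,s_{k-1}$ vanish because $\eta(s_i)=0$, and the term at $s=a$ vanishes because $\eta(a)=0$. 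The delicate step is the boundary term at $s=0$: the factor $E-V(\gamma(0))$ is zero while $\Dds\xi^-$ is singular, but \eqref{eq:alBordo} guarantees that the vector $(E-V(\gamma))\Dds\xi^- - g(\nabla V(\gamma),\xi^-)\dot\gamma$ extends continuously at $0$ to a vector parallel to $\nabla V(\gamma(0))$, which is $g$-orthogonal to $\eta(0)\in T_{\gamma(0)}V^{-1}(E)$. Thus the entire boundary contribution vanishes and $I_a(\xi^-,\eta)=0$.

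For positive definiteness on $\mathcal V^+$, I would write $I_a(\eta,\eta)$ as the sum of $I_{s_1}(\eta|_{[0,s_1]},\eta|_{[0,s_1]})$ and the terms $I_{s_{i-1},s_i}(\eta|_{[s_{i-1},s_i]},\eta|_{[s_{i-1},s_i]})$ for $i=2,\ldots,k$. Proposition~\ref{prop:s-0unico} applied to the zero end datum $W=0$ makes the first summand non-negative with equality only if $\eta|_{[0,s_1]}\equiv 0$, while Proposition~\ref{prop:estremi fissi} gives the same conclusion on each remaining piece. Consequently $I_a(\eta,\eta)\geq 0$ with equality iff $\eta\equiv 0$. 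The main obstacle throughout this argument is the correct handling of the singular boundary term at $s=0$, where the condition \eqref{eq:alBordo} plays the role of a natural boundary condition and takes the place of the second fundamental form contribution of the classical focal-point theory.
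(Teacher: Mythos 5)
Your proposal is correct and follows essentially the same route as the paper: piecewise interpolation by the unique Jacobi fields from Propositions~\ref{prop:s-0unico} and \ref{prop:estremi fissi} gives the direct sum, integration by parts on each subinterval gives $I_a$-orthogonality, and positive definiteness on $\mathcal V^+$ comes from the same two propositions applied piecewise. Your explicit treatment of the singular boundary term at $s=0$ via \eqref{eq:alBordo} and the orthogonality of $\nabla V(\gamma(0))$ to $\eta(0)\in T_{\gamma(0)}V^{-1}(E)$ is exactly the point the paper's terser proof leaves implicit.
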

\begin{proof}
Let $\eta \in \mathcal V$, and $\xi \in \mathcal V^-$ such that $\xi(s_j)=\eta(s_j)$ for any $j=1,\ldots,k-1$. Note that by Propositions \ref{prop:s-0unico} and  \ref{prop:estremi fissi} we see that such a $\xi$ exists and it is unique, from which we deduce that $\mathcal V ={\mathcal V}^+ \oplus {\mathcal V}^-$.
Moreover,
\begin{multline*}
I_a(\xi,\eta)= \sum_{i=1}^{k}\Big( \int_{s_{i-1}}^{s_i} (E-V(\gamma))\Big[g(\tfrac{\mathrm D}{\mathrm ds}\xi,\tfrac{\mathrm D}{\mathrm ds}\eta)+g(R(\dot\gamma,\xi)\dot \gamma,\eta)\Big]\,\mathrm ds+\\
-\int_{s_{i-1}}^{s_i}\Big[g(\nabla V(\gamma),\xi)g(\dot \gamma,\tfrac{\mathrm D}{\mathrm ds}\eta)+g(\dot \gamma,\tfrac{\mathrm D}{\mathrm ds}\xi)g(\nabla V(\gamma),\eta)+\\
\frac12g(H^V(\gamma)[\xi],\eta)g(\dot\gamma,\dot\gamma)\Big]\,\mathrm ds\Big),
\end{multline*}
so integrating by parts gives $I_a(\xi,\eta)=0$ for any $\xi \in \mathcal V^{-}$, for any $\eta \in \mathcal V^{+}$. Finally thanks to the definition of $\mathcal V^+$ and Propositions \ref{prop:s-0unico} and  \ref{prop:estremi fissi} we deduce also that $I_a$ is strictly positive definite on $\mathcal V^+$.
\end{proof}

\begin{cor}
The index of $I_a$ is equal to the index of $I_a$ restricted to $\mathcal V^-$. In particular the index of $I_a$ is finite. And the same result also holds for the nullity of $I_a$.
\end{cor}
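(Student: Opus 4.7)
The plan is to derive the corollary as a direct consequence of Proposition~\ref{prop:sommadiretta}, using a standard linear-algebraic argument valid for any symmetric bilinear form that splits as the orthogonal sum of a positive definite part and an arbitrary (finite-dimensional) part.

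First I would handle the index claim. Let $W \subset \mathcal V$ be any subspace on which $I_a$ is negative definite, and consider the projection $\pi : W \to \mathcal V^-$ induced by the direct sum decomposition $\mathcal V = \mathcal V^+ \oplus \mathcal V^-$. If $w \in W$ satisfies $\pi(w)=0$, then $w \in \mathcal V^+$, so by the positive definiteness of $I_a\big|_{\mathcal V^+}$ established in Proposition~\ref{prop:sommadiretta} we get $I_a(w,w)\ge 0$; combined with the negative definiteness on $W$ this forces $w=0$. Hence $\pi$ is injective and $\dim W \leq \dim \mathcal V^-$. Conversely, any negative definite subspace of $I_a\big|_{\mathcal V^-}$ is also a negative definite subspace of $I_a$ on $\mathcal V$. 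Taking suprema gives the equality of indices. Since $\mathcal V^-$ is finite-dimensional by construction (its elements are uniquely determined by their values at the finitely many points $\gamma(s_1),\ldots,\gamma(s_{k-1})$, thanks to Propositions~\ref{prop:s-0unico} and \ref{prop:estremi fissi}), the index of $I_a$ is finite.

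Next I would deal with the nullity. For $\xi \in \mathcal V$ in the null space of $I_a$, write $\xi=\xi^+ + \xi^-$ with $\xi^\pm \in \mathcal V^\pm$. For any $\eta^+\in \mathcal V^+$, the $I_a$-orthogonality of $\mathcal V^+$ and $\mathcal V^-$ gives
\begin{equation*}
0 = I_a(\xi,\eta^+) = I_a(\xi^+,\eta^+),
\end{equation*}
and the positive definiteness of $I_a$ on $\mathcal V^+$ forces $\xi^+=0$; thus $\xi=\xi^-\in \mathcal V^-$. Conversely, any $\xi\in \mathcal V^-$ in the null space of $I_a\big|_{\mathcal V^-}$ automatically satisfies $I_a(\xi,\eta)=I_a(\xi,\eta^-)=0$ for every $\eta = \eta^+ + \eta^- \in \mathcal V$, using orthogonality of $\mathcal V^-$ with $\mathcal V^+$ in the first step. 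Hence the null spaces coincide and, again by finite-dimensionality of $\mathcal V^-$, the nullity is finite.

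No genuine obstacle is expected here: the whole analytic content (the coercivity estimate of Proposition~\ref{thm:minimizer}, the existence and uniqueness of Jacobi fields with prescribed endpoints, and the positivity of $I_a$ on $\mathcal V^+$) is already packaged in Proposition~\ref{prop:sommadiretta}. The only point that one should double-check is that the elements of $\mathcal V^-$ indeed belong to the space $Y_a$ used in Definition~\ref{eq:index}, so that the index computed on $\mathcal V^-$ really equals the index defined in Definition~\ref{eq:index}; this is immediate from the boundary conditions in \eqref{eq:in0s1} and \eqref{eq:alBordo} and the density of $Y_a\cap H^{1,2}$ in $Y_a$ noted in Remark~\ref{rem:mitH12}.
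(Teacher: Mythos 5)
Your overall strategy is the one the paper intends: the corollary is stated there without proof, as an immediate linear--algebraic consequence of Proposition~\ref{prop:sommadiretta}, and your treatment of the nullity and of finiteness (via $\dim\mathcal V^-<+\infty$) is exactly that argument. There is, however, a gap in the index part as you wrote it. From the injectivity of the projection $\pi\colon W\to\mathcal V^-$ you only obtain $\dim W\le\dim\mathcal V^-$, which gives finiteness of the index of $I_a$ but not the inequality ``index of $I_a$ $\le$ index of $I_a\vert_{\mathcal V^-}$'' that the stated equality requires; ``taking suprema'' does not close this, since nothing you proved says that $\pi(W)$ is a \emph{negative definite} subspace of $\mathcal V^-$. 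The missing step is precisely that: write $w=w^++w^-$ with $w^\pm\in\mathcal V^\pm$ and use the $I_a$-orthogonality of the splitting together with $I_a(w^+,w^+)\ge0$ to get $I_a(w^-,w^-)=I_a(w,w)-I_a(w^+,w^+)\le I_a(w,w)<0$ for $w\in W\setminus\{0\}$ (and $w^-=\pi(w)\ne0$ by your injectivity argument). Hence $\pi(W)$ is a negative definite subspace of $\mathcal V^-$ of the same dimension as $W$, and the equality of indices follows. Note that this is the same orthogonality computation you already used, correctly, for the nullity, so the fix is one line.

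A second, smaller point: the index and nullity in Definition~\ref{eq:index} are taken over $Y_a$, while your comparison takes place inside $\mathcal V$ (vector fields of class $H^{1,2}$ vanishing at $s=a$ and tangent to $V^{-1}(E)$ at $s=0$). You correctly flag that the passage from $\mathcal V$ to $Y_a$ rests on the density statement of Remark~\ref{rem:mitH12} (the weighted integrability condition is automatic for $H^{1,2}$ fields, since $E-V(\gamma)$ is bounded), and that the elements of $\mathcal V^-$, being broken Jacobi fields satisfying \eqref{eq:alBordo}, lie in $Y_a$. With the orthogonality step above inserted, your argument is complete and coincides with the proof the authors leave implicit.
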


\begin{proof}[Proof of Theorem \ref{MIT}]
Propositions  \ref{prop:s-0unico},  \ref{prop:estremi fissi} and \ref{prop:sommadiretta} allows to repeat the proof of the Index Theorem given in
\cite{docarmo} (where the case with fixed extreme points is considered). Here we  only observe that, to prove  \cite[Lemma 2.8]{docarmo},  it is more convenient to use Proposition \ref{prop:estremi fissi}, rather the Index Lemma in  \cite[Chapter 10]{docarmo}.
\end{proof}

\begin{rem}\label{rem:monotonia}
Proposition \ref{prop:estremi fissi} (rather than the Index Lemma of \cite[Ch.\ 10]{docarmo}) is used to prove that, denoting by $\nu_s$ the nullity of $I_s$ and by $i(s)$ its index, for any $\epsilon > 0$ sufficiently small
\[
i(s+\epsilon)=i(s) + \nu_s.
\]
\end{rem}

\end{document}